\newcommand{\bigset}[2]{\bigl\{#1\bigm|#2\bigr\}}
\newcommand{\bigabs}[1]{\bigl| #1 \bigr|}
\newcommand{\jap}[1]{\langle #1 \rangle}
\def\a{\alpha}
\def\d{\delta}
\def\i{\mbox{\raisebox{.5ex}{$\chi$}}}
\def\n{\nu}
\def\r{\rho}
\def\s{\sigma}
\def\x{\xi}
\def\y{\eta}
\def\z{\zeta}
\def\C{\Gamma}
\def\re{\mathbb{R}}
\def\co{\mathbb{C}}
\def\ze{\mathbb{Z}}
\def\pa{\partial}
\renewcommand{\Re}{\text{{\rm Re}\;}}
\renewcommand{\Im}{\text{{\rm Im}\;}}
\newtheorem{thm}{Theorem}[section]
\newtheorem{lem}[thm]{Lemma}
\newtheorem{prop}[thm]{Proposition}
\theoremstyle{definition}
\newtheorem{ass}{Assumption}
\newtheorem{rem}[thm]{Remark}
\newcommand{\WF}{W\!F_a}
\numberwithin{equation}{section}
\title{Analytic Wave Front Set for Solutions to Schr\"odinger Equations}
\author{Andr\'e Martinez${}^1$, Shu Nakamura${}^2$, Vania Sordoni${}^1$}
\begin{document}

\maketitle 
\addtocounter{footnote}{1}
\footnotetext{Universit\`a di Bologna, Dipartimento di
Matematica, Piazza di Porta San Donato 5, 40127 Bologna,
Italy. Partly supported by Universit\`a di Bologna, Funds
for Selected Research Topics and Founds for Agreements with
Foreign Universities}
\addtocounter{footnote}{1}
\footnotetext{Graduate School of Mathematical Science, University of
Tokyo, 3-8-1
Komaba, Meguro-ku, Tokyo, Japan 153-8914.}

\begin{abstract} 
This paper is a continuation of \cite{MNS}, where an analytic smoothing effect was proved 
for long-range type perturbations of the Laplacian $H_0$ on $\re^n$. 
In this paper, we consider short-range type perturbations $H$ of the Laplacian on $\re^n$, and 
we characterize the analytic wave front set of the solution to the Schr\"odinger equation: 
 $e^{-itH}f$, in terms of that of the free solution: $e^{-itH_0}f$, for $t<0$ in the forward nontrapping region. 
The same result holdså  for $t>0$ in the backward nontrapping region. 
This result is an analytic analogue of results by Hassel and Wunsch~\cite{HaWu} and 
Nakamura~\cite{Na3}. 
\end{abstract}


\section{Introduction} \label{sec-intro}
In this paper we consider the analytic singularities of the solutions to  a variable coefficients 
Schr\"odinger equation, where the Schr\"odigner operator $H$ is time-independent 
and of short-range type perturbation of the Laplacian $H_0$ on $\re^n$ (see Section~2 for the 
precise assumptions). We show that the analytic wave front set of a solution: 
$u(t)=e^{-itH}f$ is characterized by the analytic wave front set of the free solution:  
$e^{-itH_0} f$, and the correspondence is given by the classical wave operator. 
\vskip 0.2cm
In a recent paper~ \cite{HaWu}, A. Hassel and J. Wunsch has obtained a characterization of the wave front set of the solution to the Schr\"odinger equation, in terms of the oscillations of the initial data near infinity (or near the boundary in the more general case of a so-called {\it scattering manifold}). More precisely, assuming that the metric is globally nontrapping, and denoting by $H$ the corresponding Laplacian, they show that the wave front set of $e^{-itH}f$ is determined by the so-called {\it scattering wave front set} of $e^{ir^2/2t}f$, where the factor $e^{ir^2/2t}$ corresponds to the explicit quadratic oscillatory behavior of $e^{-itH}f$. (If the metric is not nontrapping, the result remains valid in the non-backward-trapped set for $t>0$, and in the non-forward-trapped set for $t<0$.)
\vskip 0.2cm
The proof of \cite{HaWu} is based on the construction  of a global parametrix for the kernel of the Schr\"odinger propagator $e^{-itH}$, and requires a considerable amount of microlocal machinery (such as the scattering calculus of pseudodifferential operators, introduced by R.B. Melrose \cite{Me}). 
For the asymptotically flat metric case, Nakamura~\cite{Na3} gave simpler proof based on a Egorov-type argument, and the main result of the present paper may be considered as an analytic generalization of this result. This result is later extended to long-range type perturbations of the Laplacian in \cite{Na4} (note that in the previous results, the Schr\"odinger operator is supposed to be a short-range type perturbation of the Laplacian). 

\vskip 0.2cm
Before Hassel-Wunsch's work  \cite{HaWu}, many investigations have been made to study the possible {\it smoothness} of $e^{-itH}f$, giving rise to a wide series of results, both in the $C^\infty$ case and in the analytic case; see, e.g. , \cite{CKS, Do1, Do2, GiVe, HaKa1, HaKa2, KaWa, KRY, KaSa, KaTa, KaYa, KPV, MRZ, Na2, RoZu1, RoZu2, RoZu3, Wu, Yaj1, Yaj2, Yam, Ze}. In particular, the microlocal study of this  phenomenon was started with \cite{CKS}, and has probably reached its most refined degree of sophistication in \cite{Wu}, where the notion of {\it quadratic scattering wave front set} is introduced in the $C^\infty$ case. Then, in \cite{Na2}, Nakamura simplified the proof for the asymptotically Euclidean case, and generalized to the long-range-type perturbations of the Laplacian by introducing the notion of the {\it homogeneous wave front set}. We note that it turns out that the notion of the homogeneous wave front set is essentially equivalent to the quadratic scattering wave front set of Wunsch (see \cite{It}). 
\vskip 0.2cm
In the analytic case, the first results are due to L. Robbiano and C. Zuily \cite{RoZu1,RoZu2,RoZu3}, where they extend the results \cite{Wu} by constructing a theory for the analytic quadratic scattering wave front set, based on Sj\"ostrand's theory of microlocal analytic singularities \cite{Sj}. The theory is technically involved, though, and they have to impose a certain number of restrictions on the metric.  
By introducing a simpler notion of {\it analytic homogeneous wave front set} (inspired by Nakamura's one in the $C^\infty$ case), much of the complexity can be eliminated, and by employing this idea, the present authors \cite {MNS} have obtained a simpler and more general proof of analytic smoothing effects for asymptotically flat metrics on $\re^n$ with long-range type perturbations.
\vskip 0.2cm
The above results on smoothing effects give a fairly precise description of where (i.e., which conic area in the phase space) the singularity of the solution $e^{-itH}u_0$ comes from. 
However, these results only give  sufficient conditions for the regularity of the solution, but not a precise characterization of the wave front set.  This was the main motivation of the paper \cite{HaWu}, and the purpose of our paper is precisely to address the same problem in the analytic category, for asymptotically flat metrics on $\re^n$. Moreover, as in \cite{MNS}, one of our main preoccupation is to provide a proof as simple as possible, despite the apparent complexity of the problem. 
\vskip 0.2cm
In \cite{MNS}, this purpose was achieved by the Bargmann-FBI transform, and, in particular, the microlocal exponential weight estimates developed for the phase-space tunneling estimates (see \cite{Ma1, Ma2, Na1}). However, the problem addressed in this paper requires more precise analysis of the functions in the phase-space, and we employ some tools from Sj\"ostrand's theory of microlocal analytic singularities \cite{Sj}. We note that, as in \cite{MNS}, we still avoid the construction of a global parametrix, and this permits us to limit the use of Sj\"ostrand's theory to its most elementary aspects (a parametrix is constructed, but in a compact region of the phase-space only). Moreover, our result is formulated  analgously to \cite{Na3}, which appears to be simpler than  \cite{HaWu}. Namely, the analytic wave front set of $e^{-itH}f$ is explicitly related to that of $e^{-itH_0}f$, where $H_0$ is the flat Laplacian, and $e^{-itH_0}$ plays the same role as the factor $e^{ir^2/2t}$ in Hassel-Wunsch's result.

\vskip 0.2cm
This paper is organized as follows: In Section~2, we formulate  our problem precisely and state our main result~(Thorem~2.1). In Section~3, we prove a transformation formula for a class of differential operators in the Sj\"ostreand space, which plays an essential role in the proof of the main theorem. Section~4 is devoted to the microlocal representation of the Hamiltonian in the Sj\"ostrand space. We explain the main idea of the proof for the flat case in Section~5. We construct a local parametrix for the propagation operator in Section~6, and the proof of the main theorem is given in Section~7. We give overview of the Sj\"ostrand theory of microlocal analytic singularity in Appendix for reader's convenience. 

\section{Notations and Main Result}\label{sec-result}

We consider the analytic wave front set of solutions to 
a Schr\"odinger equation with variable coefficients. Namely, we set
\[
H= \frac12 \sum_{j,k=1}^n D_j a_{j,k}(x)D_k 
+  \frac12\sum_{j=1}^n (a_j(x)D_j+D_ja_j(x)) +a_0(x)
\]
on $\mathcal{H}=L^2(\re^n)$, where $D_j = -i\pa_{x_j}$. We suppose 
the coefficients $\{a_\a(x)\}$ satisfy to the following 
assumptions. For $\n>0$ we denote
\[
\C_\n=\bigset{z\in\co^n}{|\Im z|<\n \jap{\Re z}}.
\]
\begin{ass} 
\label{assA}
For each $\a$, $a_\a(x)\in C^\infty(\re^n)$ is real-valued and  can be extended to a holomorphic 
function on $\C_\n$ with some $\n>0$. Moreover, for $x\in \re^n$, the matrix  $(a_{j,k}(x))_{1\leq j, k\leq n}$
is symmetric and positive definite, and  there exists $\s\in (0,1]$ 
such that, 
\begin{align*}
&\bigabs{a_{j,k}(x)-\d_{j,k}} 
\leq C_0 \jap{x}^{-1-\s}, \quad j,k=1,\dots,n, \\
&\bigabs{a_{j}(x)} 
\leq C_0\jap{x}^{-\s} , \quad\qquad j=1,\dots,n,\\
&\bigabs{ a_0(x)} 
\leq C_0 \jap{x}^{1-\s},
\end{align*}
for $x\in\C_\n$ and with some constant $C_0>0$. 
\end{ass}

In particular, $H$ is essentially selfadjoint on $C_0^\infty (\re^n)$, and, denoting by the same letter $H$ its unique selfadjoint extension on $L^2(\re^n)$, we can consider
its quantum  evolution group $e^{-itH}$.

We denote by $p(x,\xi):=\frac12 \sum_{j,k=1}^n a_{j,k}(x)\x_j\x_k $ the  principal symbol of $H$, and by $H_0:= -\frac12\Delta$ the free Laplace operator. For any $(x,\x)\in \re^{2n}$, we also denote by $(y(t,x,\x), \y(t,x,\x))={\rm exp}tH_p(x,\x)$ the solution of the Hamilton system,
\begin{equation}
\frac{d}{dt}y(t,x,\x) =\frac{\partial p}{\partial\x}(y(t,x,\x),\y(t,x,\x)),\, \frac{d}{dt}\y(t,x,\x) =-\frac{\partial p}{\partial x}(y(t,x,\x),\y(t,x,\x)),
\end{equation}
with initial condition $(y(0,x,\x),\y(0,x,\x))=(x,\x)$.

As in \cite{Na3}, we say that a point $(x_0,\x_0)\in T^*\re^n\backslash 0$ is forward non-trapping when $|y(t,x_0,\x_0)|\rightarrow \infty$ as $t\rightarrow +\infty$. In this case, it is well-known that there exist $x_+(x_0,\x_0), \x_+(x_0,\x_0)\in\re^n$, such that,
$$
|x_+(x_0,\x_0)+t \x_+(x_0,\x_0) -y(t,x_0,\x_0) | \rightarrow 0\,\, {\rm as}\,\, t\rightarrow +\infty.
$$

Our main result is,
\begin{thm}\label{mainth}
Suppose Assumption~A, and suppose $(x_0,\x_0)$ is forward non-trapping. Then, for any $t>0$ and any $u_0\in L^2(\re^n)$, one has the equivalence,
$$
(x_0,\x_0)\in \WF(u_0)\, \iff \, (x_+(x_0,\x_0), \x_+(x_0,\x_0)) \in \WF(e^{itH_0}e^{-itH}u_0).
$$
\end{thm}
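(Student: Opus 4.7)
The plan is to prove an analytic Egorov-type theorem for the combined propagator $K(t):=e^{itH_0}e^{-itH}$, following the scheme indicated in Section~5 for the flat case and extending it to variable coefficients via a local parametrix. The whole argument is set up in the FBI/Sj\"ostrand framework of the appendix, where the condition $(x_0,\xi_0)\in\WF(u_0)$ translates into the failure of exponential decay (in the semiclassical parameter) of an FBI-type transform centred at $(x_0,\xi_0)$. The task is then to transport this growth condition, through the action of $K(t)$, to a matching growth condition at the asymptotic point $(x_+,\xi_+)$.

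\textbf{Local parametrix and Egorov step.} For fixed $t>0$, the Hamilton trajectory segment $\{\exp(sH_p)(x_0,\xi_0):0\le s\le t\}$ is compact. Within a conic neighbourhood of this segment I would build a local parametrix for $e^{-itH}$, namely the object constructed in Section~6: solve the Hamilton--Jacobi equation for $p$ together with the transport equations for the amplitude, the key point being that all error terms must be controlled \emph{analytically} (i.e.\ exponentially small in the semiclassical parameter), which is precisely what the Sj\"ostrand-space calculus of Sections~3--4 is designed to do. Composing this parametrix with the explicit backward free evolution $e^{itH_0}$, whose action on analytic wave front sets is via the free flow $(x,\xi)\mapsto(x-t\xi,\xi)$, yields the analytic Egorov statement
\[
(x_0,\xi_0)\in\WF(u_0)\iff (y(t,x_0,\xi_0)-t\,\eta(t,x_0,\xi_0),\,\eta(t,x_0,\xi_0))\in\WF(K(t)u_0).
\]

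\textbf{Identification of the asymptotic point.} To replace $(y_t-t\eta_t,\eta_t)$ by $(x_+,\xi_+)$, I would first observe that Cook's method, applied to the Hamilton equations using the fact that derivatives of the symbol decay like $\langle x\rangle^{-2-\sigma}$, gives $\eta_t=\xi_+ + O(\langle t\rangle^{-1-\sigma})$ and $y_t-t\eta_t=x_+ + O(\langle t\rangle^{-\sigma})$, hence pointwise convergence to $(x_+,\xi_+)$. To upgrade this convergence to a genuine wave-front identification, I would pass to the interaction picture, where the generator $V(s):=e^{isH_0}(H-H_0)e^{-isH_0}$ has symbol essentially $v(x+s\xi,\xi)$; along the asymptotic ray through $(x_+,\xi_+)$ this symbol enjoys analytic-type decay, thanks to the holomorphic extension of the coefficients to $\Gamma_\nu$ from Assumption~A. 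Estimating $\chi(K(t_2)-K(t_1))u_0$ for an FBI cut-off $\chi$ localised at $(x_+,\xi_+)$ then shows that the analytic wave front set of $K(t)u_0$ at $(x_+,\xi_+)$ is independent of $t>0$ and agrees with the one at $(y_t-t\eta_t,\eta_t)$, closing the equivalence.

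\textbf{Main obstacle.} The hardest part by a wide margin is constructing the parametrix with \emph{analytically} small errors in Sj\"ostrand space; this is the real reason nearly the entire paper (Sections~3--6) is devoted to setting up the right calculus of exponentially weighted holomorphic functions before the theorem itself can be proved in Section~7. A subtler but no less important technical point is the asymptotic identification above: the transition from polynomial decay in $x$ to exponential decay in the semiclassical parameter $h$ hinges crucially on the holomorphic extension in Assumption~A, and it is the place where the analytic category genuinely parts company with the $C^\infty$ one.
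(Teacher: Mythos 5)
Your toolbox is right: FBI/Sj\"ostrand framework, interaction-picture generator $L(s)=e^{isH_0}(H-H_0)e^{-isH_0}$, a local parametrix constructed in the complex domain, and the role of the holomorphic extension from Assumption~A in converting polynomial decay in $x$ into exponential decay in the FBI parameter $h$. But the central mechanism of the proof is missing, and this makes the two-step decomposition you propose (a finite-time Egorov theorem, followed by an identification of $(y_t-t\eta_t,\eta_t)$ with $(x_+,\xi_+)$) either unprovable or at least not what the paper does.

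The key move in the paper is the rescaling $t=hs$. The analytic wave front set is defined via the FBI transform in the limit $h\to 0_+$, so when you try to control $Tu(t,z;h)$ near $z_0=x_0-i\xi_0$ \emph{at fixed $t>0$}, the conjugated evolution equation $ih\partial_s w = h^2 Q(s,h)w$ must be run over the interval $0\le s\le t/h$, and $t/h\to\infty$ as $h\to 0$. In other words, in the semiclassical picture the effective evolution time is \emph{not} $t$ but $t/h$, and it diverges precisely as the FBI parameter shrinks. That is why the classical object that governs the answer is not the finite-time flow $F_t=\exp(-tH_{p_0})\circ\exp(tH_p)$ but its limit $F_+=\lim_{s\to\infty}F_s$, and why the asymptotic point $(x_+,\xi_+)$ appears in the theorem directly (see the last lines of Section~7: one observes that $z_{t/h}\to\kappa(x_+,\xi_+)$ as $h\to 0_+$). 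The forward non-trapping hypothesis and the short-range decay $\sigma>0$ are used exactly to ensure that the complex flow $R_s$ and the phase $\psi(s,\cdot,\cdot)$ have limits as $s\to\infty$, and that the Gronwall factor $g(s)={\cal O}(\jap{s}^{1-\sigma})={\cal O}(h^{\sigma-1})$ remains harmless up to $s=T/h$.

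By contrast, your Step A asserts a genuine Egorov theorem at fixed $t$, namely that $\WF(K(t)u_0)$ at the finite-time point $F_t(x_0,\xi_0)$ detects $\WF(u_0)$ at $(x_0,\xi_0)$. Even if this were true, it is not accessible by the methods you invoke: there is no uniform-in-$h$ parametrix for $e^{-itH}$ along a compact segment $\{\exp(sH_p)(x_0,\xi_0):0\le s\le t\}$ because the Schr\"odinger propagator does not propagate (analytic) wave front sets along bicharacteristics at finite time; the relevant semiclassical evolution time is $t/h$, not $t$. Your Step B then needs to show that, for fixed $t$, the set $\WF(K(t)u_0)$ contains $F_t(x_0,\xi_0)$ if and only if it contains $F_+(x_0,\xi_0)$. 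The Cook-type estimate you sketch controls the norm of $K(t_2)-K(t_1)$ but does not by itself relate the wave front set at two different phase-space points; pointwise convergence of $F_t(x_0,\xi_0)\to F_+(x_0,\xi_0)$ does not transfer microlocal information. So the identification step is not merely ``subtle'': as formulated it does not close the gap. The fix is to abandon the fixed-$t$ Egorov picture and instead tie the evolution time to $h$ via $t=hs$, as in Sections~5 and~7, so that the asymptotic map $F_+$ emerges from the $h\to 0$ limit without any separate identification argument.
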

\begin{rem} Replacing $u_0$ by $e^{itH}u_0$, and then changing $t$ to $-t$, this result permits to characterize the forward non-trapping points of $\WF (e^{-itH}u_0)$ for $t<0$, in terms of the free evolution. Namely, denoting by $NT_+$ the set of all forward non-trapping points, and defining on $NT_+$ the map $F_+$ by $F_+(x_0,\xi_0):=(x_+(x_0,\x_0), \x_+(x_0,\x_0))$, we obtain,
$$
\WF(e^{-itH}u_0)\cap NT_+ = F_+^{-1}(\WF(e^{-itH_0}u_0))\, \mbox{ for all } t<0.
$$
Defining in a similar way the set $NT_-$ of backward non-trapping point, and the corresponding map $F_-$, the same arguments also give,
$$
\WF(e^{-itH}u_0)\cap NT_- = F_-^{-1}(\WF(e^{-itH_0}u_0))\, \mbox{ for all } t>0.
$$
\end{rem}

\section{Preliminaries}

Setting $u(t):=e^{itH_0}e^{-itH}u_0$, we see that it is solution of,
\begin{equation}
\label{eqevol}
i\frac{\partial u}{\partial t} = L(t)u
\end{equation}
where
\begin{equation}
\label{L}
L(t)= e^{itH_0}(H-H_0)e^{-itH_0}= L_2(t)+L_1(t)+L_0(t),
\end{equation}
with,
\begin{eqnarray*}
&& L_2(t):= \frac12 \sum_{j,k=1}^n D_j (a_{j,k}^W(x+tD_x)-\delta_{j,k})D_k \\
&&  L_1(t):= \frac12\sum_{\ell=1}^n (a_\ell^W(x+tD_x)D_\ell +D_\ell a_\ell^W(x+tD_x)) \\
&& L_0(t):=a_0^W(x+tD_x).
\end{eqnarray*}
Here, we have denoted by $a^W(x,D_x)$ the usual Weyl-quantization of a symbol $a(x,\x)$, defined by,
$$
a^W(x,D_x)u(x) =\frac1{(2\pi)^n}\int e^{i(x-y)\x}a((x+y)/2,\x)u(y)dyd\x.
$$
In order to describe the analytic wave-front set of $u$, we introduce its Bargmann-FBI transform $Tu$ defined by,
$$
Tu(z,h)=\int e^{-(z-x)^2/2h}u(y)dy,
$$
where $z\in\co^n$ and $h>0$ is a small extra-parameter. Then, $Tu$ belongs to the Sj\"ostrand space $H_{\Phi_0}^{loc}$ with $\Phi_0(z):= |\Im z|^2/2$, that is (see \cite{Sj} and Appendix A), it is a holomorphic function of $z$, and, for any compact set $K\subset\co^n$ and any $\varepsilon >0$, there exits $C=C(k,\varepsilon)$ such that $|Tu(z,h)|\leq Ce^{(\Phi_0(z)+\varepsilon)/h}$, uniformly for $z\in K$ and $h>0$ small enough.

We also recall from \cite{Sj} that a point $(x,\x)$ is not in $\WF (u)$ if and only if there exists some $\delta >0$ such that $Tu ={\cal O}(e^{(\Phi_0(z)-\delta)/h})$ uniformly for $z$ close enough to $x-i\x$ and $h>0$ small enough. By using Cauchy-formula and the continuity of $\Phi_0$, it is easy to see that this is also equivalent to the existence of some $\delta'>0$ such that  $\Vert e^{-\Phi_0/h}Tu\Vert_{L^2(\Omega)} ={\cal O}(e^{-\delta' /h})$ for some complex neighborhood $\Omega$ of $x-i\xi$.

Since $T$ is a convolution operator, we immediately observe that $TD_{x_j} = D_{z_j}T$. However, in order to study the action of $L(t)$ after transformation by  $T$, we need the following key-lemma that will allow us to enter the framework of Sj\"ostrand's microlocal analytic theory. Mainly, this lemma tells us that, if $f$ is holomorphic near $\Gamma_\nu$, then, the operator $\tilde T:= T\circ f^W (x+thD_x)$ is a FBI transform with the same phase as $T$, but with some symbol $\tilde f (t,z,x;h)$.

\begin{lem}
\label{f=symb}
Let $f$ be a holomorphic function on $\C_\n$, verifying $f(x)={\cal O}(\jap{x}^\r)$ for some $\r\in\re$, uniformly on $\C_\n$. Let also $K_1$ and $K_2$ be two compact subsets of $\re^n$, with $0\notin K_2$. 
Then, there exists a function $\tilde f(t,z, x;h)$ of the form, 
\begin{equation}
\label{devf}
\tilde f(t,z, x;h) = \sum_{k=0}^{1/Ch}h^k f_k(t,z,x),
\end{equation}
where $f_k$ is defined, smooth with respect to $t$ and holomorphic with respect to $(z,x)$ near $\Sigma:= \re_t \times \{ (z,x)\, ;\, \Re z\in K_1,\, |\Re (z -x)|  + |\Im x| \leq \delta_0,\, \Im z\in K_2\}$ with $\delta_0 >0$ small enough, and such that, for any $u\in L^2(\re^n)$, one has,
\begin{eqnarray*}
Tf^W(x+thD_x) u (z,h)&=& \int_{|x-\Re z| < \delta_0} e^{-(z-x)^2/2h}\tilde f(t,z, x,h)u(x)dx \\
&& \hskip 2cm +{\cal O}(\jap{t}^{\rho_+} e^{(\Phi_0(z)-\varepsilon )/h}),
\end{eqnarray*}
for some $\varepsilon =\varepsilon (u)>0$ and uniformly with respect to $h>0$ small enough,  $z$ in a small enough neighborhood of  $K:= K_1+iK_2$, and $t\in\re$. (Here, we have set ${\rho_+}=\max (\r, 0)$.)\\
Moreover, the $f_k's$ verify,
\begin{eqnarray*}
&& f_0(t,z,x) = f(x+it(z-x)) \, ;\\
&& |\partial_{z,x}^\alpha f_k (t,z,x)| \leq C^{k+|\alpha| +1}(k+|\alpha|)! \jap{t}^\rho,
\end{eqnarray*}
for some constant $C>0$, and uniformly with respect to $k\in\ze_+$, $\alpha\in\ze_+^{2n}$, and $(t,z,x)\in \Sigma$. 
\end{lem}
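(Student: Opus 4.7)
The starting point is the explicit representation
$$Tf^W(x+thD_x)u(z,h)=\frac{1}{(2\pi)^n}\iiint e^{-(z-x)^2/(2h)+i(x-y)\x}f\Bigl(\tfrac{x+y}{2}+th\x\Bigr)u(y)\,dy\,d\x\,dx,$$
and the goal is to rewrite it, modulo an exponentially small correction, as an FBI-type integral in $y$ with the same Gaussian kernel $e^{-(z-y)^2/(2h)}$ but an amplitude $\tilde f(t,z,y;h)$ (one then renames $y$ to $x$ to recover the statement). My plan is to analyse, for each fixed $y$, the inner double integral in $(x,\x)$ by an analytic stationary-phase argument in the spirit of Sj\"ostrand. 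After the rescaling $\x=\y/h$ the integral takes the semiclassical form $(2\pi h)^{-n}\iint e^{\f(x,\y)/h}f(\tfrac{x+y}{2}+t\y)\,dx\,d\y$ with $\f(x,\y)=-(z-x)^2/2+i(x-y)\y$; its unique (complex) critical point is $(x_*,\y_*)=(y,\,i(z-y))$, with critical value $\f(x_*,\y_*)=-(z-y)^2/2$. Evaluating the amplitude at this point gives $f(y+it(z-y))$, which is precisely the announced principal symbol $f_0(t,z,y)$; morally, this is the familiar rule that $T$ conjugates $hD_x$ into multiplication by $i(z-x)$.

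To make the argument rigorous I insert a cutoff $\chi(x-\Re z)$ supported in $|x-\Re z|\le\delta_0$. The contribution of $1-\chi$ is controlled by combining the pointwise bound $|e^{-(z-x)^2/(2h)}|\le e^{(\Phi_0(z)-(x-\Re z)^2/2)/h}$ with the $L^2$-estimate $\bignorm{f^W(x+thD_x)u}\le C\jap{t}^{\rho_+}\norm{u}$ and Cauchy--Schwarz; this yields exactly the remainder $O(\jap{t}^{\rho_+}e^{(\Phi_0(z)-\e)/h})$ of the statement. On the main (cutoff) part, I exploit the holomorphy of $f$ on $\C_\n$ to deform the $\y$-contour from $\re^n$ toward a contour passing through $\y_*=i(z-y)$. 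The hypothesis $0\notin K_2$ is crucial here: it forces $\Im\y_*$ to stay bounded away from the origin, so that the argument $\tfrac{x+y}{2}+t\y$ of $f$ remains inside $\C_\n$ uniformly in $t\in\re$ (its real part grows proportionally to $|t|$ and dominates its bounded imaginary part). Absolute convergence of the deformed integral, in spite of the polynomial growth of $f$ in $\y$, is secured by a preliminary integration by parts in $(x,\y)$ away from the critical region, using the non-vanishing of $\nabla\f$ there.

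On the deformed contour $\Re\f$ attains a nondegenerate maximum at $(x_*,\y_*)$, and a standard Gaussian expansion produces the formal series $\tilde f=\sum_k h^k f_k$, each $f_k$ being obtained by applying a $k$-th power of the constant-coefficient differential operator associated with the inverse Hessian of $\f$ to $f(\tfrac{x+y}{2}+t\y)$ and evaluating at $(x_*,\y_*)$. Cauchy estimates, together with $|f|\le C\jap{\cdot}^\r$ on $\C_\n$, yield the claimed bound $|\pa^\a_{z,y}f_k|\le C^{k+|\a|+1}(k+|\a|)!\jap{t}^\r$, and truncating at the optimal order $N=\lfloor 1/(Ch)\rfloor$ makes the series remainder $O(e^{-\d/h})$, which can be absorbed into the global error. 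The analytic continuation of the $f_k$ in $(z,x)$ to the neighbourhood $\S$ of the statement is then immediate from their explicit formulas.

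The main technical difficulty, in my view, is the simultaneous control of the contour deformation and the $t$-dependence. On the one hand the deformation must remain inside the holomorphy domain $\C_\n$ of $f$ \emph{uniformly} for large $|t|$, which is why $0\notin K_2$ cannot be dispensed with; on the other hand, one must verify that the factor $t\y$ inside $f$ produces only the polynomial growth $\jap{t}^\r$ at every order $k$, with no extra powers of $t$ emerging from the Hessian expansion. This works out because $\y_*=i(z-y)$ is independent of $t$, so the $t$-dependence of $f_k$ enters only through the global size bound on $f$ itself, and the analogous statement for $\pa^\a_{z,y}f_k$ follows by Cauchy estimates.
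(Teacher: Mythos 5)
Your overall plan matches the paper's: view the inner integral as a semiclassical oscillatory integral in the Weyl-intermediate variable and the rescaled frequency, apply analytic stationary phase after a contour deformation, identify $f_0(t,z,y)=f(y+it(z-y))$ at the critical point $(x_*,\eta_*)=(y,i(z-y))$, and get the higher $f_k$-estimates by Cauchy. But two steps of your sketch break down.

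\textbf{The $(1-\chi)$-estimate.} You bound the tail using the claimed $L^2$-estimate $\|f^W(x+thD_x)u\|_{L^2}\le C\jap{t}^{\rho_+}\|u\|_{L^2}$. This is false when $\rho>0$: the Weyl symbol of $f^W(x+thD_x)$ is $f(x+th\xi)$, which is unbounded in $(x,\xi)$, and $f^W(x+thD_x)=e^{-itH_0}f^W(x)e^{itH_0}$ is unitarily conjugate to multiplication by the unbounded function $f$. The lemma must cover, e.g., $f=a_0$ with $\rho=1-\sigma>0$. The paper gets the tail estimate by a different route: a preliminary contour shift (in the Weyl-intermediate variable only) yields the pointwise bound
$\tilde f(t,z,x;h)=O\bigl(\jap{t}^{\rho_+}e^{(|\Im\xi|^2-\delta|\Im\xi|^2/\jap{\Im\xi})/2h}\bigr)$ with $\Im\xi=\Re z-x$; combined with $|e^{-(z-x)^2/2h}|=e^{(\Phi_0(z)-|\Im\xi|^2/2)/h}$ this produces a gain $e^{-\delta|\Re z-x|^2/(2\jap{\Re z-x}h)}$ which, paired against $u\in L^2$, gives the exponentially small tail without ever needing $f^W$ to be bounded.

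\textbf{The contour deformation.} You propose to shift only the $\eta$-contour so that it passes through $\eta_*=i(z-y)$, relying on integration by parts for convergence. This cannot produce a good contour: the phase $\varphi=-(z-x)^2/2+i(x-y)\eta$ has $\partial_\eta^2\varphi\equiv0$, so a translation of the $\eta$-contour gives no quadratic decay in $\eta$ whatsoever; the decay must come from an $\eta$-dependent deformation of the $x$-contour. This is exactly what the paper does (in its notation, $\tilde y=y+i\Im x-2i\delta\,\frac{\eta+\Re\xi}{\jap{\eta+\Re\xi}}$). Moreover, the $\eta$-shift the paper does add is \emph{localized} near the critical point via cutoffs $\chi_1,\chi_2$; this is not cosmetic. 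If you shift $\eta$ everywhere to $\eta_0+i(z-y)$ with $\eta_0\in\re^n$, then for $\eta_0$ far from $-\Im z$ the argument $\frac{x+y}{2}+t\eta$ has imaginary part $\approx t\Re(z-y)$ and real part $\approx \frac{x+y}{2}+t(\eta_0-\Im z)$, and the latter can stay bounded (e.g.\ near $\eta_0=\Im z$) while the former grows like $\delta_0|t|$, so the argument leaves $\C_\nu$ and $f$ is no longer defined along your contour. A small further slip: it is the \emph{real} part of $\eta_*=i(z-y)$, namely $-\Im z$, that $0\notin K_2$ keeps away from zero, not its imaginary part.
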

\begin{proof} We write,
\begin{equation}
\label{TA}
Tf^W(x+thD_x) u (z,h)= \int e^{-(z-x)^2/2h}\tilde f(t,z, x,h)u(x)dx,
\end{equation}
with,
$$
\tilde f(t,z, x,h):=\frac1{(2\pi h)^n}\int e^{(z-x)^2/2h +i(x-y)\eta /h -(z-y)^2/2h} f((x+y)/2 -t\eta)dyd\eta,
$$
where the last integral is oscillatory with respect to $\eta$. Setting,
$$
\xi  := i(z-x),
$$
we can re-write $\tilde f(t,z, x,h)$ as,
\begin{equation}
\label{ftilde}
\tilde f(t,z, x,h)=\frac1{(2\pi h)^n}\int e^{i(x-y)(\eta +\xi) /h -(x-y)^2/2h} f((x+y)/2 -t\eta)dyd\eta,
\end{equation}
and, making the change of contour of integration,
\begin{equation}
\label{chcont1}
\re^n\ni y\mapsto y -2 i\delta \frac{\overline{\eta + \xi}}{\jap{\eta + \xi}},
\end{equation}
with $\delta >0$ small enough, we easily obtain,
$$
\tilde f(t,z, x,h) ={\cal O}\left(\int e^{(2|\Im\x|\cdot |x-y|-|x-y|^2 - 2\delta |\eta +\xi|^2/\jap{\eta +\xi})/2h}\jap{t}^{\rho_+} dyd\eta\right).
$$
Therefore, integrating first with respect to $\eta$ (considering separately the two regions $\{ |\eta + \Re\xi| \geq |\Im\xi |\}$ and $\{ |\eta + \Re\xi| \leq |\Im\xi |\}$), 
\begin{eqnarray*}
\tilde f(t,z, x,h) &=&{\cal O}\left(\int e^{(2|\Im\x|\cdot |x-y|-|x-y|^2 - \delta|\Im\xi|^2/\jap{\Im \xi})/2h}\jap{t}^{\rho_+} dy\right)\\
&=&{\cal O}\left(\int e^{(|\Im\x|^2-(|x-y|-|\Im\x|)^2 - \delta|\Im\xi|^2/\jap{\Im \xi})/2h}\jap{t}^{\rho_+} dy\right)\\
&=&{\cal O}\left(e^{(|\Im\x|^2 - \delta|\Im\xi|^2/\jap{\Im \xi})/2h}\jap{t}^{\rho_+} \right).
\end{eqnarray*}
In particular, since $\Im\xi =\Re z -x$, for any $\delta_0 >0$ we obtain from (\ref{TA}),
\begin{eqnarray*}
Tf^W(x+thD_x) u (z,h)&=&\int_{|x-\Re z|\leq \delta_0}e^{-(z-x)^2/2h}\tilde f(t,z, x,h)u(x)dx \\
&& \hskip 2cm + {\cal O}(\jap{t}^{\rho_+} e^{(\Phi_0(z)-\varepsilon )/h}),
\end{eqnarray*}
with some $\varepsilon >0$ constant.

We also observe  that the change of contour (\ref{chcont1}) permits us to extend $\tilde f(t,z,x;h)$ as a holomorphic function of $(z,x)$ for $|\Im x|$ small enough.

Next, for $|\Re (z-x)| +|\Im x|$ small enough and $\Im z\in K_2$, and starting again from (\ref{ftilde}), we want to make the change of contour of integration,
\begin{equation}
\label{chcont1}
\gamma\, :\, \re^{2n}\ni (y,\eta )\mapsto (\tilde y,\tilde \eta)\in\co^{2n},
\end{equation}
defined by,
\begin{equation}
\label{ytilde}
\left\{
\begin{array}{l}
\tilde y := y+i\Im x -2i\delta \frac{\eta +\Re\xi}{\jap{\eta +\Re\xi}};\\
\tilde\eta := \eta -i\i_1 (y-\Re x)\i_2 (\eta +\Re\xi )\Im\xi,
\end{array}\right.
\end{equation}
where $\delta >0$ is small enough,  $\i_1,\i_2\in C_0^\infty (\re^n)$ are 1 near 0, and $\i_2$ is supported in a neighborhood of 0 sufficiently small in order to have $\eta \not= 0$ on the support of $\i (\eta +\Re\xi)$. This is indeed possible since $\Re\xi = -\Im z +\Im x$ remains close to $-\Im z$ that stays away from 0 because, by assumtion,  $0\notin K_2$. In particular, on the support of $\i_1 (y-\Re x)\i_2 (\eta +\Re\xi )$, we have,
$$
|\Im ((x+\tilde y)/2 -t\tilde \eta)|\leq |\Im x|  + \delta + |t|\cdot|\Im\xi|,
$$
while,
$$
|\Re ((x+\tilde y)/2 -t\tilde \eta)|\geq (\frac{|t|}{C_1} - C_1)_+
$$
for some $C_1>0$ depending only on the compact sets $K_1$ and $K_2$. Therefore,  taking $|\Re (z-x)| +|\Im x|\leq \delta_0$ with $\delta_0 << \nu/C_1^2$, we see that $(x+\tilde y)/2 -t\tilde \eta\in \Gamma_\nu$ for any $t\in\re$, and that  $\re^{2n}$ can be transformed continuously into $\gamma$, staying inside $\Gamma_\nu$ (just replace $i$ by $i\mu$ into the expressions of $\tilde y$ and $\tilde\eta$, and move $\mu$ from 0 to 1). As a consequence, by the assumptions on $f$, we can substitute $\gamma$ to $\re^{2n}$ into (\ref{ftilde}), and we obtain,
\begin{equation}
\label{ftilde1}
\tilde f(t,z, x,h)=\frac1{(2\pi h)^n}\int_{\re^{2n}} e^{i\phi/h} gdyd \eta,
\end{equation}
with (setting $\i := \i_1 (y-\Re x)\i_2 (\eta +\Re\xi )$),
\begin{eqnarray*}
\phi &=& (\Re x -y)(\eta +\Re\xi +i(1-\i )\Im\xi)\\
&& +2i\delta \frac{\eta+\Re\xi}{\jap{\eta +\Re\xi}}(\eta +\Re\xi+i(1-\i )\Im\xi )\\
&&  +i(\Re x -y)^2/2 -2i\delta^2\frac{(\eta +\Re\xi)^2}{\jap{\eta +\Re\xi}^2}\\
&& -2\delta (\Re x -y)\frac{\eta+\Re\xi}{\jap{\eta +\Re\xi}},
\end{eqnarray*}
and,
$$
g= f((x+\tilde y)/2 -t\tilde\eta )\det d_{(y,\eta)}(\tilde y,\tilde\eta),
$$
where $\tilde y,\tilde\eta $ are given by (\ref{ytilde}). In particular,   on $\{\i\not= 1\}$, we see that, 
$$
\Im\phi\geq 2\varepsilon_1 + (\Re x -y)^2/4 +2\varepsilon_1 |\eta +\Re\xi| - (|\Re x-y|+|\eta +\Re\xi|) |\Im\xi|,
$$
for some constant $\varepsilon_1>0$, and therefore, shrinking $\delta_0$ so that $\delta_0<< \varepsilon_1$, we obtain,
$$
\Im\phi\geq \varepsilon_1 + \varepsilon_1(\Re x -y)^2 +\varepsilon_1 |\eta +\Re\xi|,
$$
on $\{\i\not= 1\}$. As a consequence, we obtain from (\ref{ftilde1}),
\begin{equation}
\label{ftilde2}
\tilde f(t,z, x,h)=\frac1{(2\pi h)^n}\int_{\{\i =1\}} e^{i\phi/h} gdyd \eta +{\cal O}(\jap{t}^{\rho_+} e^{-\varepsilon_1/h}).
\end{equation}
Next, we observe that, in the interior of $\{\i =1\}$, both $\phi$ and $g$ are analytic functions of $(y,\eta)$, and $\phi$ admits $(y,\eta )=(\Re x, -\Re\xi )$ as its unique (non degenerate) critical point. Moreover,  we have $\Im \phi \geq 0$ everywhere  on $\{\i =1\}$, and $\phi\geq\varepsilon_1>0$ on the boundary of  $\{\i =1\}$. Thus, we are exactly in the situation of Theorem 2.8 of \cite{Sj} (Analytic Stationary Phase Theorem), from which we learn,
\begin{equation}
\label{ftilde3}
\tilde f(t,z, x,h)= \sum_{k=0}^{1/Ch}h^kf_k(t,z,x)+{\cal O}(\jap{t}^{\rho_+} e^{-\varepsilon/h}),
\end{equation}
with $C, \varepsilon >0$ constant, and $f_k$ of the form,
$$
f_k(t,z,x)=\frac1{k!}A^{k}(ag)\left|_{{y=\Re x}\atop{\eta =\Im (z-x)}}\right.
$$
with $A=A(z,x,y,\eta, D_y,D_\eta)$ differential operator of order 2 with analytic coefficients near $\Sigma':=\{ (z,x, \Re x, \Im (z-x))\, ;\, \Re z\in K_1,\, |\Re (z -x)|  + |\Im x| \leq \delta_0,\, \Im z\in K_2\}$,  $a=a(z,x,y,\eta )$ analytic near $\Sigma'$, and $a(z,x,\Re x, \Im (z-x)) = (\det {\rm Hess}_{y,\eta}\hskip 1pt \frac1{i}\phi )^{-1/2}\left|_{{y=\Re x}\atop{\eta =\Im (z-x)}}\right. =1$. Then,  from Cauchy estimates, we obtain (with some $C,C'>0$ constant),
\begin{eqnarray*}
|f_k| &\leq& C^{k+1}(k!)^{-1}\sup_{|\alpha|\leq 2k}\jap{t}^{|\alpha|}|(\partial^{\alpha}f )(x+it(z-x))|\\
&\leq& C'^{k+1}k! \jap{t}^{|\alpha|}\jap{x+it(z-x))}^{\rho - |\alpha|},
\end{eqnarray*}
and, since $\jap{x+it(z-x)}\geq \jap{t}/C''$ for some $C''>0$ when $(z,x)\in\Sigma$, the result follows (whatever the sign of $\rho$ is, and again by Cauchy estimates for the estimates on the derivatives of $f_k$).
\end{proof}
\section{Microlocalization}
\label{microloc}

From now on, we essentially use the tools and procedures of \cite{Sj}, in order to entirely transpose our problem into the $(t,z)$-space. 

For $K=K_1+iK_2\subset\subset \re^n +i (\re^n\backslash 0)$, we denote by $H_{\Phi_0, K}$ the Sj\"ostrand space of germs of $h$-dependent holomorphic functions $v=v(z;h)$ defined for $z$ in a neighborhood of $K$, verifying $v(z;h) ={\cal O}(e^{(\Phi_0(z)+\varepsilon)/h})$ for all $\varepsilon >0$ and uniformly with respect to $h>0$ small enough and $z$ near $K$. Moreover, two elements of $H_{\Phi_0, K}$ are identified when there exists $\varepsilon >0$ such that the difference between them is ${\cal O}(e^{(\Phi_0(z)-\varepsilon)/h})$ uniformly for $h>0$ small enough and $z$ near $K$. Then, following \cite{Sj} Formula (7.8), for $t\in\re$, we consider the operator,
$$
Q(t)\, :\, H_{\Phi_0, K} \rightarrow H_{\Phi_0, K},
$$
defined by,
\begin{equation}
\label{Qt}
Q(t)v(z;h) := \frac1{(2i\pi h)^n}\int_{\gamma(z)}e^{-(z-x)^2/2h +(x-y)^2/2h}\tilde f(t,z,x;h)v(y)dxdy,
\end{equation}
where $\tilde f$ is as in Lemma \ref{f=symb}, and $\gamma(z)$ is the complex $2n$-contour (see Appendix \ref{goodcontour}) defined by,
$$
\gamma(z)\, :\, x=\frac{y+z}2-i\Im z -R(\overline{z-y})\,\, ;\, \, |y-z| <r,
$$
with $R>1$ arbitrary, and $r>0$ fixed sufficiently small in order to have $(z,x, \Re x, \Im (z-x))\in\Sigma'$ for $(x,y)$ on this contour. We observe that $\gamma(z)$ is a ``good contour'' in the sense  of \cite{Sj} (see also Appendix \ref{goodcontour}) for the map,
$$
\varphi_z\, :\, (x,y)\mapsto \Phi_0(y) + \Re (-(z-x)^2/2 +(x-y)^2/2),
$$
that is, there exists a constant $C>0$ such that, for $(x,y)\in \gamma (z)$, one has,
\begin{equation}
\label{boncontour}
\varphi_z(x,y) - \Phi_0(z)\leq -\frac1{C}(|x-\Re z|^2+|y-z|^2)
\end{equation}
(observe that $\Phi_0(z)$ is nothing but the critical value of $\varphi_z$, reached at its only critical point $(x,y)=(\Re z,z)$). Indeed, along $\gamma(z)$, one computes,
$$
\varphi_z(x,y) = \Phi_0(z) -(R-1)(\Im y -\Im z)^2 - R(\Re y - \Re z)^2,
$$
and $|x-\Re z|\leq (1+R)|y-z|$. 

A consequence of (\ref{boncontour}) is that $Q(t)$ is well defined as an operator $:\, H_{\Phi_0, K} \rightarrow H_{\Phi_0, K}$. Moreover, it is a pseudodifferential operator in the complex domain in the sense of \cite{Sj}, that is,
\begin{lem}
For any $v\in H_{\Phi_0, K}$, one has,
\begin{equation}
\label{Qpseudo}
Q(t)v(z ;h)=\frac1{(2\pi h)^n}\int_{\gamma'(z)}e^{i(z-y)\zeta /h}\tilde q(t,z,y,\zeta ;h)v(y)dyd\zeta,
\end{equation}
where $\gamma'(z)$ is the complex $2n$-contour defined by,
$$
\gamma'(z)\, :\, \zeta =-\Im z +iR(\overline{z-y})\,\, ;\, \, |y-z| <r,
$$
and
$$
\tilde q(t,z,y,\zeta ;h):= f(t, z, (y+z)/2 +i\zeta ;h).
$$
\end{lem}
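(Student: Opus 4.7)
\medskip
\noindent\textbf{Proof plan.}
The plan is to perform a linear change of integration variable that converts the $(x,y)$-integral defining $Q(t)v$ in \eqref{Qt} into the advertised oscillatory integral in $(y,\zeta)$. Concretely, for fixed $y,z$, I set
\[
x = \frac{y+z}{2} + i\zeta, \qquad\text{i.e.}\qquad \zeta = -i\Bigl(x-\frac{y+z}{2}\Bigr),
\]
so that $z-x = \tfrac12(z-y) - i\zeta$ and $x-y = \tfrac12(z-y)+i\zeta$. A direct computation then gives the key algebraic identity
\[
-\frac{(z-x)^2}{2} + \frac{(x-y)^2}{2} = \frac12\Bigl[\bigl(\tfrac{z-y}{2}+i\zeta\bigr)^{\!2}-\bigl(\tfrac{z-y}{2}-i\zeta\bigr)^{\!2}\Bigr] = i(z-y)\zeta,
\]
which is exactly the phase appearing in \eqref{Qpseudo}. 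This is the heart of the argument.

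\medskip
\noindent Next I compute the effect on the measure and on the contour. The substitution is affine with complex Jacobian $i^n$, so $dx\,dy = i^n\,d\zeta\,dy$, which turns the prefactor $(2i\pi h)^{-n}$ into $(2\pi h)^{-n}$. For the contour, $\gamma(z)$ is parametrized (for $|y-z|<r$) by
\[
x=\frac{y+z}{2}-i\Im z - R(\overline{z-y}),
\]
so substituting into $\zeta = -i(x-(y+z)/2)$ yields
\[
\zeta = -i\bigl(-i\Im z - R(\overline{z-y})\bigr) = -\Im z + iR(\overline{z-y}),
\]
which is precisely the parametrization of $\gamma'(z)$. Finally the amplitude $\tilde f(t,z,x;h)$ is transformed into $\tilde f\bigl(t,z,\tfrac{y+z}{2}+i\zeta;h\bigr) =: \tilde q(t,z,y,\zeta;h)$. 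Putting these four observations together gives \eqref{Qpseudo}.

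\medskip
\noindent The only point that requires verification beyond the routine algebra is that the change of contour is legitimate, i.e.\ that $\tilde f(t,z,x;h)$ is holomorphic in a neighborhood of the intermediate family of contours joining $\gamma(z)$ to $\gamma'(z)$. This is where the restriction $r$ small enough (ensuring $(z,x,\Re x,\Im(z-x))\in\Sigma'$ along $\gamma(z)$) is used. Since the substitution is affine and $\gamma(z)$ itself already lies in the domain of holomorphy guaranteed by Lemma~\ref{f=symb}, the image contour and all intermediate affine deformations remain in that domain for $r$ small, so Cauchy's theorem applies and no boundary term appears. I do not expect any serious obstacle here; the argument is essentially a contour deformation made transparent by the identity $-(z-x)^2/2+(x-y)^2/2 = i(z-y)\zeta$.
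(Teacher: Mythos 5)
Your proof is correct and is essentially the same one-line argument the paper gives: the algebraic identity $(x-y)^2/2-(z-x)^2/2=(z-y)\bigl(x-(y+z)/2\bigr)$ together with the substitution $\zeta=i\bigl((y+z)/2-x\bigr)$. One small remark: your closing paragraph about deforming contours is superfluous. Since $(x,y)\mapsto(\zeta,y)$ is a biholomorphic affine change of variable that maps the parametrized surface $\gamma(z)$ \emph{exactly} onto $\gamma'(z)$ (as you yourself compute), the identity \eqref{Qpseudo} is just the change-of-variables formula for the differential form $\tilde f\,e^{\cdots}\,dx\wedge dy$; no Cauchy/Stokes argument or intermediate family of contours is needed, and hence no extra holomorphy hypothesis beyond what makes the original integral well defined.
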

\begin{proof}
Just observe that $(x-y)^2/2 - (z-x)^2/2 = (z-y)(x-(y+z)/2)$, and make the change of variable
$x\mapsto \zeta = i((y+z)/2-x)$ in (\ref{Qt}).
\end{proof}

Thanks to this lemma, we can observe that, if we substitute 1 to $\tilde f$ in (\ref{Qt}), then the resulting operator is just the identity on $H_{\Phi_0, K}$. We also notice that, by definition, $Q(t)$ is the formal composition $\tilde T\circ S$ of $\tilde T := Tf^W(x+thD_x)$ by the operator $S$ given by,
$$
Sv(x;h):=\frac1{(2i\pi h)^n}\int e^{(x-y)^2/2h}v(y)dy
$$
(defined on a suitable weighted space: see \cite{Sj} Section 7). When $f=1$ this means that, actually, $S$ is the formal inverse of $T$. As a consequence, we are exactly in the situation of \cite{Sj} Proposition 7.4  (with  $\Phi =\widetilde\Phi =\Phi_0$), and we learn from this proposition that, for any $u\in L^2(\re^n)$, 
\begin{equation}
\label{microl}
Tf^W(x+thD_x)u = Q(t)Tu\quad {\rm in}\,\, H_{\Phi_0, K}.
\end{equation}
\begin{rem}
In this discussion we have kept $t$ fixed arbitrarily, but independent of $h$. However, due to the estimates we have on $\tilde f$ in Lemma \ref{f=symb}, it is clear that all the discussion remains valid whenever $t$ depends on $h$, as long as it does not become exponentially large for $h\rightarrow 0_+$ (in the case $\rho >0$). In particular, for any fixed $T>0$, (\ref{microl}) remains uniformly true for $|t|\leq T/h$.
\end{rem}
\begin{rem}
By the symbolic calculus of pseudodifferential operators in the complex domain (in particular \cite{Sj} Lemma 4.1), we see that, in (\ref{Qpseudo}), we can replace $\tilde q(t,z,y,\zeta ;h)$ by the $y$-independent symbol (called the symbol of $Q(t)$),
$$
q(t,z,\zeta ;h):=\sum_{|\alpha|\leq 1/Ch} \frac1{\alpha !}\left(\frac{h}{i}\right)^{|\alpha|}\partial_\zeta^\alpha\partial_y^\alpha \tilde q (t,z,z,\zeta ;h),
$$
where $C>0$ is chosen large enough. Moreover, we deduce from (\ref{devf}) that $q$ can be re-written as,
\begin{equation}
\label{devq}
q(t,z,\zeta ;h) = \sum_{k=0}^{1/Ch} h^k q_k(t, z,\zeta ) + {\cal O}(\jap{t}^{\rho_+} e^{-\varepsilon/h}),
\end{equation}
with a possibly larger constant $C>0$, $\varepsilon >0$, and $q_k$ verifying,
$$
|\partial_{(z,\zeta )}^\alpha q_k(t,z,\zeta ) |\leq C^{k+|\alpha|+1}(k+|\alpha|)!\jap{t}^\rho,
$$
where all the estimates are uniform with respect to $h>0$ small enough, $k\in\ze_+$, $\alpha\in\ze_+^{2n}$, $t\in\re$, $z$ in a neighborhood of $K$, and $\zeta$ close enough to $-\Im z$. Finally, we easily compute that, in (\ref{devq}), $q_0$ is given by,
$$
q_0(t,z,\zeta ) = f(z+i\zeta + t\zeta ).
$$
\end{rem}

Now, applying the previous results of this section to the cases  $f=a_{j,k}$ and $f=a_\ell$ ($1\leq j,k\leq n$, $0\leq \ell\leq n$), and with $t$ replaced by $t/h$ ($|t|\leq T$), we obtain from (\ref{eqevol})-(\ref{L}) and from Assumption \ref{assA}, and for any $K\subset\subset \re^n + i(\re^n\backslash 0)$,
\begin{equation}
\label{eqTu}
i\frac{\partial Tu}{\partial t} = Q(th^{-1}, h)Tu\quad {\rm in}\,\, H_{\Phi_0, K},
\end{equation}
with,
\begin{eqnarray}
\label{Qdet}
&& Q(th^{-1}, h)= M_2(th^{-1}, h)+M_1(th^{-1}, h)+Q_0(th^{-1}, h) ;\\
&& M_2(th^{-1}, h)=\frac12\sum_{j,k=1}^n D_{z_j}Q_{j,k}(th^{-1},h)D_{z_k} ;\nonumber\\
&& M_1(th^{-1}, h)=\frac12\sum_{\ell =1}^n (Q_\ell (th^{-1},h)D_{z_\ell}+D_{z_\ell}Q_\ell(th^{-1},h)), \nonumber
\end{eqnarray}
where $Q_{j,k}(th^{-1},h)$ ($1\leq j,k\leq n$) and $Q_\ell (th^{-1},h)$ ($0\leq \ell\leq n$) are pseudodifferential operators on $H_{\Phi_0, K}$, with respective symbols $q_{j,k}(th^{-1},h)$ and $q_\ell(th^{-1},h)$ verifying,
\begin{eqnarray}
&& q_{j,k} (th^{-1}, z,\zeta ,h)= \sum_{m=0}^{1/Ch} h^m q_{j,k}^{(m)}(th^{-1},z,\zeta );\nonumber\\
&& q_\ell (th^{-1}, z,\zeta ,h)=\sum_{m=0}^{1/Ch}h^m q_\ell^{(m)}(th^{-1},z,\zeta );\nonumber\\
\label{estq}
&& q_{j,k}^{(0)}(th^{-1},z,\zeta ) = a_{j,k} (z+i\zeta + th^{-1}\zeta ) - \delta_{j,k};\\
&& q_\ell^{(0)}(th^{-1},z,\zeta ) = a_\ell (z+i\zeta + th^{-1}\zeta )\quad\quad (\ell =0,1,\cdots,n);\nonumber\\
&& |\partial_{(z,\zeta )}^\alpha q_{j,k}^{(m)}(th^{-1},z,\zeta )| \leq C^{m+|\alpha|+1}(m+|\alpha|)!\jap{th^{-1}}^{-1-\sigma};\nonumber\\
&& |\partial_{(z,\zeta )}^\alpha q_\ell^{(m)} (th^{-1},z,\zeta ) |\leq C^{m+|\alpha|+1}(m+|\alpha|)!\jap{th^{-1}}^{-\sigma} \quad (\ell\not=0 );\nonumber\\
&& |\partial_{(z,\zeta )}^\alpha q_0^{(m)}(th^{-1},z,\zeta )| \leq C^{m+|\alpha|+1}(m+|\alpha|)!\jap{th^{-1}}^{1-\sigma},\nonumber
\end{eqnarray}
where the estimates are uniform with respect to $h>0$ small enough, $m\in\ze_+$, $\alpha\in\ze_+^{2n}$, $t$ real, $|t|\leq T$, $z$ in a neighborhood of $K$, and $\zeta$ close enough to $-\Im z$. 

\section{The Flat Case}
\label{flat}
When $p=p_0:=\x^2/2$, let us show how we can easily deduce  the result from (\ref{eqTu}). In that case, we obviously have $(x_+(x_0,\xi_0),\xi_+(x_0,\xi_0)) =(x_0,\xi_0)$, and we apply the results of the previous sections with $K=\{ z_0\} =\{ x_0-i\xi_0\}$ (that is, we work on the space $H_{\Phi_0,z_0}$).
\vskip 0.2cm
Setting $t=hs$ and $w(s):= Tu(sh)$, Equation (\ref{eqTu}) becomes,
\begin{equation}
\label{evolwflat}
ih\partial_s w(s) = h^2Q(s,h)w(s)\,\mbox{ in } H_{\Phi_0}(|z-z_0|<\varepsilon_0),
\end{equation}
for some $\varepsilon_0 >0$ independent of $s\in\re$. Moreover, since $p=p_0$, the symbol $b_1$ of $B_1(s):= h^2Q(s,h)$ is of the form,
$$
b_1 (s) = h\sum _{k=0}^{1/Ch}h^kb_{1,k}(s),
$$
with $b_{1,0}={\cal O}(\jap{s}^{-\sigma})$, and $b_{1,k}={\cal O}(\jap{s}^{1-\sigma})$ when $k\geq 1$.
\vskip 0.2cm
Let us denote by $\tilde \Phi_0 =\tilde \Phi_0(z,\overline z)$ a smooth real-valued function defined near $z=z_0$, such that $|\tilde\Phi_0 -\Phi_0|$ and $|\nabla_{(z,\overline z)}(\tilde \Phi_0 -\Phi_0 )|$ are small enough, and verifying,
\begin{eqnarray}
\label{phitilde0}
&&  \tilde \Phi_0\geq \Phi_0 \mbox{ in } \{ |z-z_0|\leq \varepsilon_0\};\\
\label{phitilde1}
&& \tilde \Phi_0 = \Phi_0 \mbox{ in } \{ |z-z_0|\leq \varepsilon_0/4\};\\
\label{phitilde2}
&& \tilde \Phi_0>\Phi_0 +\varepsilon_1 \mbox{ in } \{ |z-z_0|\geq \varepsilon_0/2\},
\end{eqnarray}
for some $\varepsilon_1>0$. 
\vskip 0.2cm
Then, by changing the contour defining $B_1(s)$ to a singular contour (see \cite{Sj}, Remarque 4.4), we know that $B_1(s)$ is a bounded operator from the space, 
$$
L^2_{\tilde \Phi_0}(z_0,\varepsilon_0):= L^2(\{ |z-z_0|<\varepsilon_0\}; e^{-2\tilde \Phi_0/h}d\Re z \hskip 1pt d \Im z)\cap H_{\tilde\Phi_0}(|z-z_0|<\varepsilon_0),
$$
to the space $L^2_{\tilde \Phi_0}(z_0,\varepsilon_0/2)$. Moreover, its norm can be estimated in terms of the supremum of its symbol, and, in particular, here we obtain,
\begin{equation}
\label{estB1}
\Vert B_1(s)\Vert_{{\cal L}\left(L^2_{\tilde \Phi_0}(z_0,\varepsilon_0); L^2_{\tilde \Phi_0}(z_0,\varepsilon_0/2)\right)}={\cal O}(h\jap{s}^{-\sigma} + h^2\jap{s}^{1-\sigma})={\cal O}(h\jap{s}^{-\sigma}),
\end{equation}
uniformly with respect to $h>0$ small enough and $|s|\leq T/h$ ($T>0$ fixed arbitrarily).
\vskip 0.2cm
Now, by  (\ref{evolwflat}), we have,
\begin{eqnarray*}
\partial_s\Vert w(s)\Vert_{L^2_{\tilde \Phi_0}(z_0,\varepsilon_0/2)}^2 &=& 2\Re \jap{\partial_sw(s) ,w(s)}_{L^2_{\tilde \Phi_0}(z_0,\varepsilon_0/2)}\\
&=& 2\Im  \jap{h^{-1}B_1(s)w(s) ,w(s)}_{L^2_{\tilde \Phi_0}(z_0,\varepsilon_0/2)},
\end{eqnarray*}
and thus, by Cauchy-Schwarz inequality and (\ref{estB1}),
\begin{equation}
\label{derivnorm}
\left\vert \partial_s\Vert w(s)\Vert_{L^2_{\tilde \Phi_0}(z_0,\varepsilon_0/2)}^2 \right\vert\leq C\jap{s}^{-\sigma}\Vert w(s)\Vert_{L^2_{\tilde \Phi_0}(z_0,\varepsilon_0)}^2,
\end{equation}
for some constant $C>0$. Moreover, since $\Vert u(t)\Vert_{L^2} = \Vert u_0\Vert_{L^2}$ does not depend on $t$, we see that, for any $\varepsilon >0$, we have,
$$
\sup_{|z-z_0|\leq \varepsilon_0}|w(s)| \leq C_\varepsilon e^{(\Phi_0(z) +\varepsilon)/h},
$$
with $C_\varepsilon >0$ depending on $\varepsilon$ but not on $s\in\re$. As a consequence, using (\ref{phitilde2}), we immediately obtain,
$$
\Vert w(s)\Vert_{L^2_{\tilde \Phi_0}(z_0,\varepsilon_0)}^2 = \Vert w(s)\Vert_{L^2_{\tilde \Phi_0}(z_0,\varepsilon_0/2)}^2 +{\cal O}(e^{-\varepsilon_1/h}),
$$
uniformly with respect to $h$ and $|s|\leq T/h$. Inserting this estimate into (\ref{derivnorm}), this gives,
$$
\left\vert \partial_s\Vert w(s)\Vert_{L^2_{\tilde \Phi_0}(z_0,\varepsilon_0/2)}^2\right\vert \leq C\jap{s}^{-\sigma}\Vert w(s)\Vert_{L^2_{\tilde \Phi_0}(z_0,\varepsilon_0/2)}^2 + Ce^{-\varepsilon_1/h},
$$
and thus, by Gronwall's lemma, and setting $g(s):= C\int_0^s\jap{s'}^{-\sigma}ds'$,
\begin{eqnarray}
\label{gronw1}
\Vert w(s)\Vert_{L^2_{\tilde \Phi_0}(z_0,\varepsilon_0/2)}^2 \leq e^{g(s)}\Vert w(0)\Vert_{L^2_{\tilde \Phi_0}(z_0,\varepsilon_0/2)}^2 +C\int_0^se^{g(s)-g(s')-\varepsilon_1/h}ds';\\
\label{gronw2}
\Vert w(0)\Vert_{L^2_{\tilde \Phi_0}(z_0,\varepsilon_0/2)}^2 \leq e^{g(s)}\Vert w(s)\Vert_{L^2_{\tilde \Phi_0}(z_0,\varepsilon_0/2)}^2 +C\int_0^se^{g(s')-\varepsilon_1/h}ds'.
\end{eqnarray}
Now, if $(x_0,\xi_0)\notin WF_a(u_0)$, by (\ref{phitilde0}), we have,
$$
\Vert w(0)\Vert_{L^2_{ \tilde \Phi_0}(z_0,\varepsilon_0/2)}^2 \leq \Vert w(0)\Vert_{L^2_{ \Phi_0}(z_0,\varepsilon_0/2)}^2 ={\cal O}(e^{-\varepsilon_2/h}),
$$
for some $\varepsilon_2>0$. Thus, inserting into (\ref{gronw1}), we obtain (with some new constant $C>0$),
$$
\Vert w(s)\Vert_{L^2_{\tilde \Phi_0}(z_0,\varepsilon_0/2)}^2 \leq Ce^{g(s)-\varepsilon_2/h} +C\int_0^se^{g(s)-g(s')-\varepsilon_1/h}ds'.
$$
In particular, using (\ref{phitilde1}), we deduce,
\begin{equation}
\label{impl1}
\Vert w(s)\Vert_{L^2_{\Phi_0}(z_0,\varepsilon_0/4)}^2 \leq Ce^{g(s)-\varepsilon_2/h} +C\int_0^se^{g(s)-g(s')-\varepsilon_1/h}ds'.
\end{equation}
Then,  replacing $s$ by $t/h$ and observing that $g(s)={\cal O}(\jap{s}^{1-\sigma}) = {\cal O}(h^{\sigma -1})$,  the implication $(x_0,\xi_0)\notin WF_a(u_0)\Rightarrow (x_0,\xi_0)\notin WF_a(u(t))$
 follows immediately from (\ref{impl1}). The converse implication can be seen in the same way by using (\ref{gronw2}). Therefore, in that case, we have  proved that $\WF(u_0) = \WF(e^{itH_0}e^{-itH}u_0)$ for all $t\in\re$ and all $u_0\in L^2(\re^n)$. In particular, replacing $u_0$ by $e^{itH}u_0$, and then changing $t$ to $-t$, we obtain,
 \begin{prop}
 Suppose Assumption~A and $a_{j,k}=\delta_{j,k}$ for all $j,k$. Then, for any $t\in\re$ and any $u_0\in L^2(\re^n)$, one has,
$$
\WF(e^{-itH}u_0) = \WF(e^{-itH_0}u_0).
$$
 \end{prop}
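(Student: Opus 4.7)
The plan is to follow exactly the strategy already set up in Section~\ref{flat}: apply the FBI transform $T$ to the conjugated evolution $u(t):= e^{itH_0}e^{-itH}u_0$, set $w(s):=Tu(hs)$, and work in a small complex neighborhood of the base point $z_0 := x_0 - i\xi_0$. The crucial observation exploited by the flat hypothesis is that, with $a_{j,k}=\delta_{j,k}$, the principal part $M_2$ in (\ref{Qdet}) drops out, so the evolution $ih\partial_s w = h^2Q(s,h)w$ is driven by $B_1(s):= h^2 Q(s,h)$ whose symbol $b_1$ starts at order $h$, with leading term of size $O(\jap{s}^{-\sigma})$ (coming from $a_\ell$, $\ell\neq 0$) and a remainder of size $O(h^2\jap{s}^{1-\sigma})$ (coming from $a_0$). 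Absent the quadratic contribution, the operator norm bound (\ref{estB1}) holds with decay $\jap{s}^{-\sigma}$ rather than mere boundedness.

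Next, I would introduce the truncated weight $\tilde\Phi_0$ with the properties (\ref{phitilde0})--(\ref{phitilde2}), so that estimates in the weighted space $L^2_{\tilde\Phi_0}(z_0,\varepsilon_0/2)$ can be transferred back to the $\Phi_0$-weighted space on the smaller ball of radius $\varepsilon_0/4$. Differentiating the square of the $L^2_{\tilde\Phi_0}$-norm and using Cauchy--Schwarz together with (\ref{estB1}) yields the differential inequality
\begin{equation*}
\left|\partial_s \Vert w(s)\Vert_{L^2_{\tilde\Phi_0}(z_0,\varepsilon_0/2)}^2\right| \leq C\jap{s}^{-\sigma}\Vert w(s)\Vert_{L^2_{\tilde\Phi_0}(z_0,\varepsilon_0)}^2.
\end{equation*}
The mass on the annulus $\{\varepsilon_0/2\leq |z-z_0|\leq \varepsilon_0\}$ is absorbed into an $O(e^{-\varepsilon_1/h})$ error via the $L^2$-conservation $\Vert u(t)\Vert_{L^2}=\Vert u_0\Vert_{L^2}$ and the strict gap $\tilde\Phi_0\geq \Phi_0+\varepsilon_1$ on that annulus.

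Gronwall's lemma, as in (\ref{gronw1})--(\ref{gronw2}), then gives both implications between $\Vert w(0)\Vert$ and $\Vert w(s)\Vert$ in the $L^2_{\tilde\Phi_0}$-norm, up to an exponentially small error. To extract the statement about wave front sets, I would substitute $s=t/h$ and observe that the integrated weight $g(t/h)=O(\jap{t/h}^{1-\sigma})=O(h^{\sigma-1})$ is $o(h^{-1})$ because $\sigma>0$; hence any exponential decay $e^{-\varepsilon/h}$ on $w(0)$ coming from $(x_0,\xi_0)\notin \WF(u_0)$ survives after being multiplied by $e^{g(t/h)}$, giving $(x_0,\xi_0)\notin \WF(u(t))$, and conversely. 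This proves $\WF(u_0)=\WF(e^{itH_0}e^{-itH}u_0)$ for every $t\in\re$; the stated identity $\WF(e^{-itH}u_0)=\WF(e^{-itH_0}u_0)$ then follows by replacing $u_0$ with $e^{itH}u_0$ and changing $t$ to $-t$, exactly as in the remark after Theorem~\ref{mainth}.

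The main obstacle is really the interplay between the growth $\jap{s}^{1-\sigma}$ of the subprincipal symbol (from $a_0$) and the tiny $h$-prefactor: it is only because the quadratic part has vanished that $b_1=O(h\jap{s}^{-\sigma}+h^2\jap{s}^{1-\sigma})$ and not $O(1)$, so that after the change $s=t/h$ the exponent $g(t/h)$ stays of size $o(1/h)$. In the genuine variable-coefficient setting of Theorem~\ref{mainth} this integrability fails, forcing the more elaborate parametrix construction of Sections~6--7; the flat case is what makes the direct Gronwall argument work.
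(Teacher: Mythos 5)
Your proposal reproduces the paper's own proof of the flat-case proposition in Section~\ref{flat} essentially verbatim: the reduction $w(s)=Tu(hs)$, the observation that $M_2$ drops out so the symbol of $B_1(s)=h^2Q(s,h)$ is $O(h\jap{s}^{-\sigma}+h^2\jap{s}^{1-\sigma})=O(h\jap{s}^{-\sigma})$ for $|s|\leq T/h$, the auxiliary weight $\tilde\Phi_0$ with properties (\ref{phitilde0})--(\ref{phitilde2}), the energy inequality plus $L^2$-conservation to absorb the annulus up to an $O(e^{-\varepsilon_1/h})$ error, Gronwall with $g(t/h)=O(h^{\sigma-1})=o(1/h)$, and finally the substitution $u_0\mapsto e^{itH}u_0$, $t\mapsto -t$. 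This is the same route the paper takes, so there is nothing substantive to add or contrast.
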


\section{Construction of the Propagator} 
Now, we turn back to the general case, and 
the purpose of this section is to construct an operator $F(t,h)$ on $H_{\Phi_0, z_0}$, verifying,
$$
\left\{
\begin{array}{l}
i\partial_t F(t,h) - M_2^{(0)}(th^{-1},h)F(t,h) \sim 0;\\
F(0,h) =I,
\end{array}\right.
$$
where,
$$
M_2^{(0)}(th^{-1},h):= \frac12\sum_{j,k=1}^n D_{z_j}Q_{j,k}^{(0)}(th^{-1},h)D_{z_k},
$$
$Q_{j,k}^{(0)}$ being the pseudodifferential operator with symbol $q_{j,k}^{(0)}$ defined in (\ref{estq}).
\vskip 0.2cm
More precisely, setting $t=hs$, we would like to have,
$$
\left\{
\begin{array}{l}
ih\partial_s F(s,h)-h^2M_2^{(0)}(s,h)F(s,h) \sim {\cal O}(h);\\
F\left|_{s=0}\right. = I,
\end{array}\right.
$$
and we look for $F(s,h)$ as a Fourier integral operator in the complex domain, of the form,
$$
F(s)v (z) =\frac1{(2\pi h)^n}\int_{\gamma_s (z)} e^{i(\psi (s,z,\eta ) -y\eta )/h}v(y)dyd\eta,
$$
where $\psi $ is a holomorphic function and $\gamma_s (z)$ is a convenient $2n$-contour.
\vskip 0.2cm
In particular, $\psi$ must be solution of the system (eikonal equation),
\begin{equation}
\label{eiko}
\left\{
\begin{array}{l}
\partial_s\psi + b(s,z,\nabla_z\psi ) =0;\nonumber\\
 \psi\left\vert_{s=0}\right. = z.\eta,
\end{array}\right.
\end{equation}
where,
\begin{eqnarray*}
b(s,z,\zeta  ) &:=&\frac12\sum_{j,k=1}^n q_{j,k}^{(0)}(s,z,\zeta ,h)\zeta_j\zeta_k\\ 
&=&\frac12\sum_{j,k=1}^n (a_{j,k}(z+i\zeta +s\zeta )-\delta_{j,k})\zeta_j\zeta_k
\end{eqnarray*}
is the symbol of $B:= h^2M_2^{(0)}(s,h)$.
\vskip 0.2cm
We denote by  $R_{s}(z,\zeta ):= (\tilde z(s;z,\zeta) , \tilde \z(s;z,\zeta))$  the classical flow of $b$, defined by,
$$
\left\{
\begin{array}{l}
 \partial_s\tilde z = \nabla_\z b(s,\tilde z,\tilde\z);\\
\partial_s\tilde \z = -\nabla_z b(s,\tilde z,\tilde\z);\\
\tilde z\left\vert_{s=0}\right. = z\quad; \quad \tilde\z\left\vert_{s=0}\right. = \z.
\end{array}\right.
$$
Then, it is easy to check that $R_{s}$ is related to the Hamilton flow of $p$ by the formula,
\begin{equation}
\label{Rs}
R_{s}= \kappa\circ \exp (-sH_{p_0}) \circ \exp sH_p\circ \kappa^{-1},
\end{equation}
where $\exp sH_{p_0} (x,\xi ):= (x+s\xi ,\xi)$ is the Hamilton flow of $p_0:= \frac12\xi^2$, and $\kappa (x, \xi ) =(x-i\xi ,\xi)$ is the complex canonical transformation associated with $T$. 
\vskip 0.2cm
For $|s|$ small enough and $\z$ close to $\xi_0$, the map $J_{s,\y}\, :\, z\mapsto \tilde z(s,z,\z)$ is a diffeomorphism from some neighborhood of $z_0:= x_0-i\xi_0$ to its image. Then, the solution $\psi$ of (\ref{eiko}) can be constructed by the standard Hamilton-Jacobi theory (see, e.g., \cite{Ro}), and is given by,
\begin{eqnarray*}
\psi (s, z, \y) &=& z\y + \int_0^s\left[  \hat \z (s,s',z,\y )\nabla_\z b(s', \hat z(s,s',z,\y), \hat \z(s,s',z,\y)) \right.\\
&& \hskip 4cm \left.-  b(s', \hat z(s,s',z,\z), \hat \z(s,s',z,\z))  \right]ds',
\end{eqnarray*}
where we have set,
\begin{eqnarray*}
&&\hat z(s,s',z,\y):= \tilde z(s';  J_{s,\y}^{-1}(z), \y);\\
&& \hat \z(s,s',z,\y):= \tilde\z (s';  J_{s,\y}^{-1}(z), \y).
\end{eqnarray*}
Moreover, $\psi$ verifies,
\begin{eqnarray*}
&&\nabla_z\psi (s,z,\y)= \tilde \z(s;  J_{s,\y}^{-1}(z), \y);\\
&& \nabla_\y \psi (s,z,\y) = J_{s,\y}^{-1}(z),
\end{eqnarray*}
and therefore, if $\Omega_0$ is a small enough neighborhood of $(z_0, \xi_0)$ in $\co^{2n}$, the set,
\begin{equation}
\label{Lambda}
\Lambda_s := \{ (\nabla_\y \psi (s, z,\y) , \y ; z , \nabla_z\psi (s, z, \y ))\, ;\, (J_{s,\y}^{-1}(z),\y )\in \Omega_0 \},
\end{equation}
is included in the graph of $R_s$, that is,
\begin{equation}
\label{extLambda}
\Lambda_s = \{ (y,\eta ; z, \zeta) \, ; \, (y,\eta )\in \Omega_0,\,  (z,\z )=R_s(y, \eta )\}.
\end{equation}
In particular, since $R_s$ is a complex canonical transformation on $\co^{2n}$ (that is, symplectic with respect to the complex canonical 2-form $d\z\wedge dz$), we obtain that $\Lambda_s$ is a Lagrangian submanifold of $\co^{4n}$ with respect to the symplectic 2-form $\Sigma_0 := d\eta\wedge dy - d\z\wedge dz$.
\vskip 0.2cm
Now, for larger values of $|s|$, we take (\ref{extLambda}) as the definition of $\Lambda_s$, and, in order to extend the function $\psi$ to such values of $s$, too, we introduce the two sets,
\begin{eqnarray*}
 &&\Gamma_\y^0 := \{ (s, \partial_s\psi (s,z,\y); z, \nabla_z\psi (s,z,\y))\in \co^{2(n+1)}\, ;\,\\
 && \hskip 5cm |s| <s_0, \, (J_{s,\y}^{-1}(z),\y)\in \Omega_0\} ;\\
&&\widetilde \Gamma_\y^0 := \{ (s, \sigma; x, \x)\in \co^{2(n+1)}\, ;\, (s,\sigma ; \kappa (x,\xi ))\in  \Gamma_\y^0\},
\end{eqnarray*}
where $s_0>0$ is fixed small enough. In particular, by (\ref{eiko}) we see that the Hamilton field of $\sigma + b(s,z,\z)$ is tangent to $\Gamma_\y^0$, and thus, $\Gamma_\y^0$ is invariant under the map,
$$
\i_t \, :\, (s,\sigma ;z,\z )\mapsto (s+t, \s (s,t, z, \z) ; R_t (z,\z )),
$$
where $\s (s,t, z, \z) := -b(s+t, R_t(z,\z ))$, and in the sense that, for any fixed $\rho\in \Gamma_\y^0$, one has $\i_t(\rho) \in \Gamma_\y^0$ if $|t|$ is small enough. 

Consequently, we see that $\widetilde\Gamma_\y^0$ is invariant under the map,
$$
\tilde\i_t \, :\, (s,\sigma ;x,\x )\mapsto (s+t, \tilde\s (s,t, x, \x) ; F_t (x,\x )),
$$
where  $F_t = \exp (-tH_{p_0})\circ \exp tH_p$ and $\tilde\s (s,t, x, \x):= \s (s,t, \kappa (x, \x)) =(p_0 -p)\circ \exp(s-t)H_{p_0}\circ\exp tH_p (x,\xi)$. 

These invariances permit to us to enlarge the sets $\Gamma_\y^0$ and $\widetilde\Gamma_\y^0$ by setting,
$$
\Gamma_\y:= \bigcup_{t\in\re}\i_t(\Gamma_\y^0) \quad ;\quad    \widetilde\Gamma_\y:= \bigcup_{t\in\re}\tilde\i_t(\widetilde\Gamma_\y^0).
$$

Then, $\Gamma_\y$ is Lagrangian with respect to the symplectic 2-form $\Sigma_1:= d\s\wedge ds + d\z\wedge dz$, and, in order to extend $\psi$, it is enough to prove that the projection $(s,\s ;z,\z)\mapsto (s,z)$ is a local diffeomorphism on $\Gamma_\eta$ for $\eta$ close enough to $\xi_0$. 

By continuity with respect to $\y$, it is sufficient to prove that, for any $t\in\re$, the tangent space $T_{\rho_t}\Gamma_{\xi_0}$ of $\Gamma_{\xi_0}$ at $\rho_t:= (t,\s (0, t; z_0,\x_0);R_t(z_0,\xi_0))$, is transverse to $V_0:= \{ s=0 \, ; \, z=0\}$, or, equivalently, setting  $\tilde\rho_t:=(t,\tilde\s (0,t; x_0,\xi_0) ; F_t(x_0,\xi_0))$, that $T_{\tilde\rho_t}\widetilde\Gamma_{\xi_0}$  is transverse to the subspace $\tilde V_0:= \{ (0,\s;\kappa^{-1}(0,\z))\, ;\, \s \in\co,\, \z\in\co^n\}$.
\vskip 0.2cm
To do this, we consider the quadratic form ${\boldsymbol q}(u) :=\frac1{2i}\Sigma_1(u,\overline u)$ on $\co^{2(n+1)}$. 
Observing that we have,
 \begin{eqnarray*}
&& \partial_s\nabla_z\psi (0, z_0,\z_0) = -\nabla_xp(x_0,\x_0),\\
&& \partial_s^2\psi (0, z_0,\z_0)  =  -\xi_0\cdot \nabla_xp(x_0,\x_0)+i |\nabla_xp(x_0,\x_0)|^2,
 \end{eqnarray*}
for $t=0$, we obtain,
\begin{eqnarray*}
T_{\rho_0}\Gamma_{\x_0} &=&\{ (\delta_s,\delta_\s;\delta_z,\delta_\z )\, ;\, \delta_\z = -\nabla_xp(x_0,\x_0)\delta_s,\\
&& \hskip 0.5cm\delta_\s = (-\xi_0\cdot \nabla_xp(x_0,\x_0)+i |\nabla_xp(x_0,\x_0)|^2)\delta_s- \nabla_xp(x_0,\x_0)\delta_z\},
\end{eqnarray*}
and therefore,
\begin{eqnarray*}
T_{\tilde\rho_0}\widetilde\Gamma_{\x_0} &=&\{ (\delta_s,\delta_\s;\delta_x,\delta_\x )\, ;\,  \delta_\x = -\nabla_xp(x_0,\x_0)\delta_s,\\
&& \hskip 1cm \delta_\s = -\xi_0\cdot \nabla_xp(x_0,\x_0)\delta_s- \nabla_xp(x_0,\x_0)\delta_x\}.
\end{eqnarray*}
Then, since $\xi_0$ and $\nabla_xp(x_0,\x_0)$ are real, one easily checks that   ${\boldsymbol q} =0$ on $T_{\tilde\rho_0}\widetilde\Gamma_{\x_0}$. As a consequence, using that $T_{\tilde\rho_t}\widetilde\Gamma_{\x_0} = d\tilde\i_t (\rho_0)\left (T_{\tilde\rho_0}\widetilde\Gamma_{\x_0}\right)$ and the fact that  $\tilde\i_t$ is symplectic and preserves the real, we  deduce that ${\boldsymbol q} =0$ on $T_{\tilde\rho_t}\widetilde\Gamma_{\x_0}$ for all $t\in\re$.

On the other hand, if $u= (\delta_s ,\delta_\s; \delta_x ,\delta_\x) =(0,\delta_\s; i\delta_\x ,\delta_\x) \in \tilde V_0$, an immediate computation gives ${\boldsymbol q} (u) = -|\delta_\x|^2$.

Now, on $T_{\tilde\rho_t}\widetilde\Gamma_{\x_0}$, by construction, we have $\delta_\s = d_{s,x,\x}\tilde\s (0,t, x_0, \x_0)\cdot (\delta_s,\delta_x,\delta_\x)$, and thus, one easily concludes from the previous discussion that, 
$$
T_{\tilde\rho_t}\widetilde\Gamma_{\x_0}\cap \tilde V_0 =\{0\},
$$
for all $t\in\re$. 

Consequently, $T_{\rho_t}\Gamma_{\y}$ is transverse to $V_0$ for $\eta$ close enough to $\xi_0$, and, since $\Gamma_{\y}$ is Lagrangian with respect to $\Sigma_1$, this means that it can be written as,
\begin{equation}
\label{extpsi}
\Gamma_\y = \{ (s,\partial_s\psi ; z, \nabla_z\psi )\, ;\, s\in\re, \, z=\tilde z(s,0; y,\eta),\, (y,\y)\in\Omega_0\},
\end{equation}
where  $\psi$ is an extension of the previous function $\psi$. 
Of course, this extension is also solution of (\ref{eiko}) on its domain of definition, and, since it depends analytically on $(s,z,\eta)$, the relation,
\begin{equation}
\label{repRs}
(z,\nabla_z\psi (s,z,\eta )) = R_s(\nabla_\eta \psi (s,z,\eta ), \eta ),
\end{equation}
valid for $|s|$ small enough, remains valid for all $s\in\re$. In particular, the submanifold $ \{ (\nabla_\y \psi (s, z,\y) , \y ; z , \nabla_z\psi (s, z, \y ))\, ;\, (\nabla_\y \psi (s, z,\y),\y )\in \Omega_0 \}$ is included in $\Lambda_s$, and since they are both Lagrangian with respect to $\Sigma_0$ and project on the same set $\Omega_0$ under $\pi\, :\, (y,\y ;z,\z)\mapsto (y,\y)$, they are equal.
In other words, $\psi_s$ is a generating function of the complex canonical transformation $R_s$.

Now, we prove,
\begin{lem} 
\label{pcnondeg}
For any $s\in \re$, $\y$ close enough to $\x_0$ and $z$ close enough to $z_s:=\pi_z R_s(z_0,\xi_0)$, the matrix $I+\Im \nabla_\eta^2\psi (s,z,\eta )$ is invertible.
\end{lem}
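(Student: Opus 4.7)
The plan is to exploit the fact that the complex canonical transformation $R_s$ preserves the I-Lagrangian $\Lambda_{\Phi_0}:=\{(z,\zeta)\in\co^{2n}:\zeta=-\Im z\}$, to reduce the statement to a tangent-space contradiction at the base point $(z,\eta)=(z_s,\xi_0)$, and to finish with a continuity argument. The invariance $R_s(\Lambda_{\Phi_0})=\Lambda_{\Phi_0}$ is immediate from (\ref{Rs}) and the identity $\kappa(\re^{2n})=\Lambda_{\Phi_0}$, since both $\exp sH_p$ and $\exp(-sH_{p_0})$ preserve real phase space.

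I would first write the differential $dR_s$ at $(z_0,\xi_0)$ in block form as $\bigl(\begin{smallmatrix}\mathcal{A}&\mathcal{B}\\\mathcal{C}&\mathcal{D}\end{smallmatrix}\bigr)$. Differentiating the identity $R_s(\nabla_\eta\psi,\eta)=(z,\nabla_z\psi)$ at fixed $z$ yields $\mathcal{A}\,\partial_\eta y+\mathcal{B}=0$, so $M:=\nabla_\eta^2\psi(s,z_s,\xi_0)=-\mathcal{A}^{-1}\mathcal{B}$ (invertibility of $\mathcal{A}$ at the base point is built into the very existence of $\nabla_\eta^2\psi$ as a finite matrix, which is presumed in the statement). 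Now suppose for contradiction that there exists $v\in\re^n\setminus\{0\}$ with $(I+\Im M)v=0$, and set $d\eta:=v$ and $dy:=Mv$. Then $\Im(dy)=(\Im M)v=-v=-d\eta$, and because $d\eta$ is real the pair $(dy,d\eta)$ lies in $T_{(z_0,\xi_0)}\Lambda_{\Phi_0}$, i.e.\ the real tangent space defined by $d\eta\in\re^n$ and $d\eta+\Im(dy)=0$. Since $dy=Mv=-\mathcal{A}^{-1}\mathcal{B}v$, we have $\mathcal{A}\,dy+\mathcal{B}\,d\eta=0$, so $dR_s(dy,d\eta)=(0,\mathcal{C}dy+\mathcal{D}d\eta)$. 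By $\Lambda_{\Phi_0}$-invariance this image must itself lie in $T_{(z_s,\xi_s)}\Lambda_{\Phi_0}$, which imposes $d\zeta=-\Im(dz)$; combined with $dz=0$ this forces $d\zeta=0$. Hence $dR_s(dy,d\eta)=0$ although $(dy,d\eta)\neq 0$, contradicting the fact that $R_s$ is a diffeomorphism.

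The only genuinely delicate step is the contradiction itself: the vector $v$ must be chosen \emph{real} so that $(dy,d\eta)$ actually sits in the totally real submanifold $T\Lambda_{\Phi_0}$; without this real-tangent-vector setup, bijectivity of $dR_s$ cannot be converted into a statement about the complex matrix $I+\Im M$. Once invertibility is secured at $(z_s,\xi_0)$, it propagates to a full neighborhood thanks to the continuous (actually real-analytic in the real and imaginary parts of its arguments) dependence of $\nabla_\eta^2\psi$ on $(z,\eta)$, and the lemma is proved.
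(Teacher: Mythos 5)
Your proof is correct, and it proves exactly the right statement: injectivity of the real matrix $I+\Im\nabla_\eta^2\psi$ at the base point $(z_s,\xi_0)$, followed by continuity. The substance is the same as in the paper — both arguments rest on the two facts that $R_s$ is a (symplectic) diffeomorphism so $dR_s$ is an isomorphism, and that $R_s$ preserves the totally real I-Lagrangian $\Lambda_{\Phi_0}=\kappa(\re^{2n})$. What differs is the coordinates and the bookkeeping. The paper conjugates by $\kappa$ and works with the real flow $F_{-s}=\exp(-sH_{p_0})\circ\exp sH_p$: it differentiates the relation $(\nabla_\eta\psi+i\eta,\eta)=F_{-s}(z+i\nabla_z\psi,\nabla_z\psi)$ with respect to $\eta$, obtains two matrix identities involving $d_x\tilde y, d_\xi\tilde y, d_x\tilde\eta, d_\xi\tilde\eta$, feeds a hypothetical real kernel vector $\alpha$ through them, and splits the resulting complex equation into real and imaginary parts — which is precisely where the reality of $dF_{-s}$ at the base point is used — to land on the invertibility of ${}^t d_{x,\xi}F_{-s}(x_0,\xi_0)$. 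You avoid the $\kappa$-conjugation and the explicit splitting: you encode reality once and for all via $\Lambda_{\Phi_0}$-invariance, read the generating-function relation $M=-\mathcal{A}^{-1}\mathcal{B}$ off the block form of $dR_s$, and observe that a kernel vector $v$ of $I+\Im M$ produces a nonzero element of $T\Lambda_{\Phi_0}\cap\ker(\pi_z\circ dR_s)$, which cannot exist because $\pi_z$ restricted to $T\Lambda_{\Phi_0}$ is injective. This is a cleaner, more invariant formulation of the same argument; the paper's explicit matrix computation makes the nondegeneracy hypothesis more visibly tied to a concrete object. One small imprecision: the invertibility of $\mathcal{A}$ is not "built into the existence of $\nabla_\eta^2\psi$" per se; it comes from differentiating (\ref{repRs}) with respect to $z$, which gives $I=\mathcal{A}\,\nabla_z\nabla_\eta\psi$ — this is the precise sense in which the existence of the generating function $\psi$ forces $\mathcal{A}$ to be invertible, and it is exactly what the paper uses when it invokes (\ref{gradinv}) to conclude that $\nabla_\eta\nabla_z\psi$ is invertible.
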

\begin{proof}  Setting $(\tilde y (s,x,\x), \tilde\eta (s,x,\x)=\exp(-sH_{p_0})\circ \exp sH_p(x,\x )= F_s(x,\x)$, we can re-write (\ref{repRs}) as,
\begin{eqnarray*}
&&\nabla_\eta\psi + i\eta = \tilde y(-s, z+i\nabla_z\psi , \nabla_z\psi );\\
&& \eta = \tilde \y(-s, z+i\nabla_z\psi , \nabla_z\psi ).
\end{eqnarray*}
Therefore, differentiating with respect to $\y$,
\begin{eqnarray}
&&\nabla_\eta^2\psi + iI =(id_x \tilde y + d_\xi\tilde y)\cdot (\nabla_\y\nabla_z\psi) = {}^t(\nabla_\y\nabla_z\psi) \cdot {}^t(id_x \tilde y + d_\xi\tilde y)  ;\nonumber\\
\label{gradinv}
&& I = (id_x \tilde \y + d_\xi\tilde \y)\cdot (\nabla_\y\nabla_z\psi) = {}^t(\nabla_\y\nabla_z\psi) \cdot {}^t(id_x \tilde \y + d_\xi\tilde \y),
\end{eqnarray}
where ${}^tA$ stands for the transposed of the matrix  $A$, $d_x\tilde y$ stands for the matrix $(\partial_x\tilde y) (-s, z+i\nabla_z\psi , \nabla_z\psi )$, and similarly for the quantities $d_\x\tilde y$, $d_x\tilde \y$, and $d_\x\tilde \y$.

In particular, if $\alpha \in\re^n$ is such that $(I+\Im \nabla_\eta^2\psi )\alpha =0$, we obtain,
$$
{}^t(\nabla_\y\nabla_z\psi) \cdot {}^t(id_x \tilde y + d_\xi\tilde y)\alpha = :\beta \in \re^n,
$$
and thus, by (\ref{gradinv}),
\begin{equation}
\label{injective}
 {}^t(id_x \tilde y + d_\xi\tilde y)\alpha = {}^t(id_x \tilde \y + d_\xi\tilde \y)\beta.
\end{equation}
Now, for $\y = \x_0$ and $z=\pi_z R_s(z_0,\xi_0)$, the matrices $d_x \tilde y$, $d_\x \tilde y$, $d_x \tilde \y$ and $d_\x \tilde y$ are real, and therefore, in that case, (\ref{injective}) is equivalent to,
$$
\left(\begin{array}{cc}
{}^td_x \tilde y & {}^td_x \tilde \y\\
{}^td_\x \tilde y & {}^td_\x \tilde \y
\end{array}\right)\cdot 
\left(\begin{array}{c}
\alpha\\
-\beta
\end{array}\right) =
\left(\begin{array}{c}
0\\
0
\end{array}\right),
$$
that implies $\alpha =\beta =0$, since $\left(\begin{array}{cc}
{}^td_x \tilde y & {}^td_x \tilde \y\\
{}^td_\x \tilde y & {}^td_\x \tilde \y
\end{array}\right) = {}^td_{x,\x}F_{-s}(x_0,\x_0)$ is invertible. This proves that the real matrix  $I+\Im \nabla_\eta^2\psi $ is injective on $\re^n$, and thus invertible.
\end{proof}
\begin{lem} 
\label{pointcrit}
For $z$ close enough to $z_s$, the map,
$$
\Omega_0 \ni (y,\eta )\mapsto \Phi_0(y) - \Im (\psi (s, z,\eta ) -y\eta )\in\re,
$$
admits a  saddle point at  $(y(s,z) ,\y (s,z)):= R_{-s}(z, -\Im z)$, with critical value $\Phi_0(z)$. 
\end{lem}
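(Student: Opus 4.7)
The plan is to verify three things: the critical-point equations, the value at the critical point, and the non-degeneracy (``saddle'') property. Write $\Phi(y,\eta) := \Phi_0(y) + \Im(y\eta) - \Im\psi(s,z,\eta)$.

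First, a short Wirtinger computation, using that $\psi$ is holomorphic in $\eta$, gives
\begin{equation*}
\partial_y \Phi = -\tfrac{i}{2}(\Im y + \eta), \qquad \partial_\eta \Phi = \tfrac{i}{2}(\nabla_\eta\psi(s,z,\eta) - y),
\end{equation*}
so the critical equations reduce to $\eta = -\Im y$ (forcing $\eta$ to be real) together with $y = \nabla_\eta\psi(s,z,\eta)$. To check them at $(y(s,z),\eta(s,z)) := R_{-s}(z,-\Im z)$, note that $\kappa^{-1}(z,-\Im z) = (\Re z,-\Im z)\in\re^{2n}$ and that the real Hamiltonian flows of $p$ and $p_0$ preserve $\re^{2n}$; hence the composition defining $R_{-s}$ returns, before the final application of $\kappa$, a real pair $(\tilde x,\tilde\xi)$, so $(y(s,z),\eta(s,z)) = \kappa(\tilde x,\tilde\xi) = (\tilde x - i\tilde\xi,\tilde\xi)$. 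This yields $\eta(s,z) = \tilde\xi \in \re^n$ and $\Im y(s,z) = -\tilde\xi = -\eta(s,z)$. The remaining equation $y(s,z) = \nabla_\eta\psi(s,z,\eta(s,z))$, along with the auxiliary identity $\nabla_z\psi(s,z,\eta(s,z)) = -\Im z$, then follows from the generating-function relation (\ref{repRs}) applied to $R_s(y(s,z),\eta(s,z)) = (z,-\Im z)$.

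Second, let $\phi(s,z) := \Phi(y(s,z),\eta(s,z))$. Since the gradient of $\Phi$ vanishes at the critical point, only the explicit $s$-dependence of $\psi$ contributes, and the eikonal equation (\ref{eiko}) yields
\begin{equation*}
\partial_s\phi = -\Im\,\partial_s\psi(s,z,\eta(s,z)) = \Im\, b(s,z,\nabla_z\psi(s,z,\eta(s,z))) = \Im\, b(s,z,-\Im z).
\end{equation*}
Substituting into the explicit form of $b$, the argument of $a_{j,k}$ becomes $z - i\Im z - s\Im z = \Re z - s\Im z\in\re^n$, on which $a_{j,k}$ is real; combined with the real factor $\Im z_j\,\Im z_k$, this gives $b(s,z,-\Im z)\in\re$, and hence $\partial_s\phi\equiv 0$. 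At $s=0$, $R_0 = \mathrm{id}$ and $\psi(0,z,\eta) = z\eta$ yield $(y(0,z),\eta(0,z)) = (z,-\Im z)$ and $\phi(0,z) = \Phi_0(z)$; thus $\phi(s,z) = \Phi_0(z)$ for all admissible $s$.

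Third, for the non-degeneracy I would compute the real $4n\times 4n$ Hessian of $\Phi$ in the coordinates $(y_1,y_2,\eta_1,\eta_2)$. Using $\partial_{\bar\eta}\psi = 0$, one finds $\partial_{\eta_1\eta_1}\Im\psi = \Im\nabla_\eta^2\psi$, $\partial_{\eta_1\eta_2}\Im\psi = \Re\nabla_\eta^2\psi$, and $\partial_{\eta_2\eta_2}\Im\psi = -\Im\nabla_\eta^2\psi$; a short block-row reduction then reduces the Hessian determinant (up to sign) to $\det(I + \Im\nabla_\eta^2\psi)$, which is non-zero by Lemma \ref{pcnondeg}. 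I expect this Hessian bookkeeping to be the main technical step; the other two stages are short once the Wirtinger calculus is on the table.
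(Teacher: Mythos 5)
Your proof follows essentially the same route as the paper's, with several steps spelled out in more detail: the paper states $\Im b(s,z,-\Im z)=0$ without comment, whereas you correctly trace the argument of $a_{j,k}$ to the real point $\Re z - s\Im z$; and your use of $\kappa^{-1}(z,-\Im z)\in\re^{2n}$ together with reality of the flows makes explicit why $R_{-s}$ preserves the set $\{\eta = -\Im y\}$, which the paper also invokes. Your Hessian computation in part 3 is correct: the real $4n\times 4n$ Hessian in $(y_1,y_2,\eta_1,\eta_2)$ block-reduces so that its determinant equals $\pm\det(I+\Im\nabla_\eta^2\psi)$, which is non-zero by Lemma~\ref{pcnondeg}.

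There is, however, one genuine gap at the end of part 3. Non-degeneracy of the Hessian for all $s$ is \emph{not} the same as the critical point being a saddle: for the good-contour formalism one needs the Hessian to have signature $(2n,2n)$, and a determinant computation alone does not control the signature. The paper closes this gap by noting that at $s=0$ one has $\psi(0,z,\eta)=z\eta$, hence $\nabla_\eta^2\psi=0$; the Hessian then has the explicit hyperbolic block form whose signature is $(2n,2n)$, so $(y(0,z),\eta(0,z))$ is a saddle. Combined with the non-degeneracy for all $s$ (which makes the signature locally constant in $s$), this gives the saddle property for every $s$ by continuity. You should add this base-case-plus-continuity step; otherwise your third stage only proves non-degeneracy, not the lemma's claim that the critical point is a saddle.
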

\begin{proof} We compute,
\begin{eqnarray*}
&& \nabla_y\left( \Phi_0(y) - \Im (\psi (s, z,\eta ) -y\eta )\right) = -\frac{i}2(\Im y + \eta);\\
&& \nabla_\y\left( \Phi_0(y) - \Im (\psi (s, z,\eta ) -y\eta )\right) =\frac{i}2 ( \nabla_\y\psi(s,z,\y) - y),
\end{eqnarray*}
so that any possible critical point must verify $\eta = -\Im y$ and $y = \nabla_\y\psi(s,z,\y)$.
By (\ref{repRs}), this implies, $(z,\nabla_z\psi (s,z,\y) )=R_s(y, -\Im y)$, and since $R_s$ preserves the set $\{ \eta = -\Im y\}$, this also implies $\nabla_z\psi (s,z,y) = -\Im z$, and therefore, $(y, \eta) = (y,-\Im y)=R_{-s}(z,-\Im z)$. 

Conversely, if $(y, \eta) =R_{-s}(z,-\Im z)$, we necessarily have $\eta = -\Im y$, and, since $\psi_s$ is a generating function of $R_s$, we also  have $(z,-\Im z) = (\nabla_\eta\psi (s, z, \eta'), \eta')$ for some $\y'$ close to $\xi_0$. This implies $\eta =\eta'$ and $y = \nabla_\y \psi(s,z,\y)$, so that, finally, $(y,\y)$ is a critical point of $\Phi_0(y) - \Im (\psi (s, z,\eta ) -y\eta )$.
 Moreover, using Lemma \ref{pcnondeg}, it is easy to check that this critical point is non-degenerate for all $s\in\re$, and since, for $s=0$, it is a saddle point, by continuity it remains a saddle point for all $s\in\re$.
 
To compute the corresponding critical value, we observe,
\begin{eqnarray*}
&&\partial_s \left[ \Phi_0(y(s,z)) - \Im (\psi (s, z,\eta(s,z) ) -y(s,z)\eta(s,z) )\right] \\
&& \hskip 6cm = -\Im (\partial_s\psi) (s, z,\eta(s,z))\\
&& \hskip 6cm =-\Im b(s, z, \nabla_z\psi (s, z,\eta(s,z))) \\
&& \hskip 6cm = - \Im b(s, z, -\Im z) =0,
\end{eqnarray*}
so that the critical value does not depend on $s$. Since, for $s=0$, this value is $\Phi_0(z)$, the result follows.
\end{proof}

Now, if we also introduce,
$$
\Gamma_\eta (s) := \{ (z,\z )\in \co^{2n}\, ; \exists \, \sigma\in\co, (s,\s, z,\z )\in \Gamma_\eta\},
$$
then, by (\ref{extpsi}), we have,
$$
\Gamma_\eta (s)=\{ (z,\nabla_z\psi (s,z, \eta))\, ; \, z=\tilde z(s; y,\eta),\, (y,\y)\in\Omega_0\},
$$
and, by the invariance of $\Gamma_\y$ under $\i_t$, we see that,
$$
\Gamma_\eta (s+t)=R_t(\Gamma_\eta (s)).
$$
In particular, setting,
$$
\Omega_0(\y):= \{ (y,\eta')\in \Omega_0\, ;\, \eta'=\eta \},
$$
we obtain $\Gamma_y (s) = R_s( \Omega_0(\y))$, that admits, as $s\rightarrow +\infty$, the limit set,
$$
\Gamma_\y(\infty ) := R_\infty ( \Omega_0(\y)),
$$
where $R_\infty (y,\eta ):= \kappa ( x_+(\kappa^{-1}(y,\y)), \x_+(\kappa^{-1}(y,\y)))$. Moreover, with $\Sigma_2:= d\z\wedge dz$, we see as before that $\frac1{2i}\Sigma_2(u,\overline u)=0$ on $\kappa^{-1}(\Gamma_\y(s ))$ for all $s\in\re$, and thus also on  $\kappa^{-1}(\Gamma_\y(\infty ))$, while $\frac1{2i}\Sigma_2(u,\overline u)$ is negative definite on $\kappa^{-1}(\{ z=0\})$. Thus, $\Gamma_\y(\infty )$ is transverse to $\{ z=0\}$, too, and therefore it can be written as,
$$
\Gamma_\y(\infty )=\{ (z,\nabla_z\psi_\infty (z, \eta))\, ; \, z=\tilde z_\infty ( y,\eta),\, (y,\y)\in\Omega_0\}
$$
where we have set $R_\infty(y,\eta )= (\tilde z_\infty ( y,\eta),\tilde \z_\infty ( y,\eta))$, and where $\psi_\infty$ is holomorphic  near $\kappa (x_+(x_0,\x_0), \x_0)$. 

We also observe that, for $t, T\geq 0$, we have $R_{T+t} = R_T + {\cal O}(\jap{T}^{-\sigma})$ on $\Omega_0$, and thus, choosing $T$ large enough, we see that $\Gamma_\y(T+t )$ is a small perturbation of $\Gamma_\y(T )$. In particular, the domain of definition of $\psi (s, \cdot, \cdot)$ does not shrink as $s\rightarrow +\infty$, and, using (\ref{eiko}) and the fact that $b(s,z,\nabla_z\psi )$ is ${\cal O}(\jap{s}^{-\sigma-1})$ uniformly, we can see that $\psi_\infty$ is
nothing but the limit of $\psi (s, \cdot,\cdot)$ as $s\rightarrow +\infty$.
\vskip 0.2cm
Then, Lemma \ref{pointcrit} can also be extended to $\psi_\infty$ (with $z_s$ replaced by $z_\infty:=\tilde z_\infty (z_0,\xi_0)$),  and permits to define, for any $s\in [0,+\infty]$, and for any $\varepsilon_0>0$ fixed small enough (independent of $s$), the Fourier Integral Operator, 
$$
F(s)\, :\, H_{\Phi_0}(|z-z_0|<\varepsilon_0) \rightarrow H_{\Phi_0}(|z-z_s|<\varepsilon_1),
$$
(where $\varepsilon_1 =\varepsilon_1(\varepsilon_0)>0$), by the formula,
$$
F(s)v (z) =\frac1{(2\pi h)^n}\int_{\gamma_s (z)} e^{i(\psi (s,z,\eta ) -y\eta )/h}v(y)dyd\eta,
$$
where $\gamma_s(z)$ is a  $2n$-contour  depending smoothly on $(s,z)$,  and is a good contour for the map: $\Omega_0 \ni (y,\eta )\mapsto \Phi_0(y) - \Im (\psi (s, z,\eta ) -y\eta )\in\re$ (for instance, one can choose  $w_1(s,z)$, \dots, $w_{2n}(s,z)\in \co^{2n}$ depending smoothly on $(s,z)$, such that $\Phi_0(y)-\Im (\psi (s, z,\eta ) -y\eta )\leq \Phi_0(z) -|t|^2$ for $(y,\eta ) = R_{-s} (z,-\Im z)+\sum t_jw_j(s,z)$, $t_1,\dots,t_{2n}\in\re$ small enough, and take $\gamma_s(z) = \{ R_{-s}(s, -\Im z) + \sum t_jw_j(s,z)\, ; \, t_j\in\re,\, |t_j|\leq r_0\}$).
\vskip 0.2cm
Then, by construction, for $s\in\re$, $F(s)$ verifies, 
$$
ih\partial_sF(s) -BF(s) = hF_1(s),
$$
where $F_1(s)\, :\, H_{\Phi_0}(|z-z_0|<\varepsilon_0) \rightarrow H_{\Phi_0}(|z-z_s|<\varepsilon_1)$ is defined by,
$$
F_1(s)v (z) =\frac1{(2\pi h)^n}\int_{\gamma_s (z)} e^{i(\psi (s,z,\eta ) -y\eta )/h}f_1(s,z,\y;h)v(y)dyd\eta,
$$
with,
\begin{eqnarray*}
hf_1(s,z,\y;h)&=&\frac1{(2\pi h)^n}\int_{\gamma} e^{i(z-z')\z/h + i(\psi (s,z',\y) -\psi (s,z,\y))/h}b(s,z,\z)dz'dz \\
&& \hskip 3cm - b(s,z, \nabla_z\psi (s,z,\y)).
\end{eqnarray*}
(Here, $\gamma= \gamma (s,z,\y)$ is a convenient good contour.) In particular, by the complex stationary phase theorem, we see that $f_1$ is an analytic symbol, and is ${\cal O}(\jap{s}^{-1-\sigma})$ as $s\rightarrow \infty$.
\vskip 0.2cm
In the same way, we see that, for any $y$ close enough to $z_0$, the function $(z,\y)\mapsto \Phi_0(z)-\Im ( y\y -\psi (s,z,\y))$ admits a saddle point at $z=\pi_z R_s(y,-\Im y)$, $\y= -\Im y $, with critical value $\Phi_0(y)$. This permits to define an operator $\tilde F(s)$ by the formula,
$$
\tilde F(s)v(y):= \frac1{(2\pi h)^n}\int_{\tilde\gamma_s (y)} e^{i(y\eta -\psi (s,z,\eta )  )/h}v(z)dzd\eta,
$$
(where $\tilde\gamma_s (y)$ is a good contour for the new phase), and we see that, for any $\varepsilon_0 >0$ small enough there exists $\varepsilon_1>0$ such that,  for any $s\geq 0$,  $\tilde F(s)$ maps $H_{\Phi_0}(|z-z_s|<\varepsilon_0)$ into $H_{\Phi_0}(|z-z_0|<\varepsilon_1)$, and  verifies,
\begin{equation}
\label{eqFtilde}
ih\partial_s\tilde F(s) +\tilde F(s)B = h\tilde F_1(s),
\end{equation}
where $\tilde F_1(s)\, :\, H_{\Phi_0}(|z-z_s|<\varepsilon_0) \rightarrow H_{\Phi_0}(|z-z_0|<\varepsilon_1)$ is a FIO with symbol $\tilde f_1 ={\cal O}(\jap{s}^{-1-\sigma})$.

\section{Completion of the Proof} 
At first, we observe that, for $s\geq 0$, the operator $A(s):= F(s)\tilde F(s)$ is well defined as an operator from $H_{\Phi_0}(|z-z_s|<\varepsilon_0)$ to $H_{\Phi_0}(|z-z_s|<\varepsilon_2)$, where $\varepsilon_0 >0$ is arbitrary small, and $\varepsilon_2 =\varepsilon_2(\varepsilon_0)>0$ does not depend on $s$. Moreover, $A(s)$ is given by,
$$
A(s)v (z )=\frac1{(2\pi h)^{2n}}\int_{\Gamma (s,z)}e^{i(\y' -\y)y/h + i(\psi (s,z,\y) -\psi (s,z',\y'))/h}v(z') dyd\y dz'd\y',
$$
where $\Gamma (s,z)$ is the $4n$-contour,
$$
\Gamma (s,z):= \{ (y,\y,z',\y')\, ; \, (y,\y )\in \gamma_s(z),\, (z',\y') \in \tilde \gamma_s(y)\}.
$$
Along this contour, by construction we have,
\begin{eqnarray}
\label{contourcomp}
&&\Phi_0(z') - \Im ((\y' -\y)y+\psi (s,z,\y) -\psi (s,z',\y')) \\
\nonumber
&& \hskip 3cm \leq \Phi_0(z) -\frac1{C}\left( |(y,\y) -Y_c(z)|^2 + |(z',\y') - Z'_c(y)|^2\right),
\end{eqnarray}
where $Y_c(z)= (y_c(z), \y_c(z)) :=R_{-s}(z,-\Im z)$, $Z'_c(y)= (z'_c(y),\y'_c(y)):=(\pi_z R_s(y,-\Im y), -\Im y)$, and $C>0$ is some uniform constant.  Moreover, $\Phi_0(z)$ is exactly the critical value of the left-hand side of (\ref{contourcomp}), reached at the point $(Y_c(z), Z'_c(y_c(z))) = (y_c(z), \y_c(z), z , \y_c(z))$. Then, (\ref{contourcomp}) proves that this contour is good, and can therefore be replaced by any other good contour for the map $(y,z',\y,\y')\mapsto \Phi_0(z') - \Im ((\y' -\y)y+\psi (s,z,\y) -\psi (s,z',\y'))$. In particular, writing,
\begin{eqnarray*}
\psi (s,z,\y) -\psi (s,z',\y') &=& \psi (s,z,\y )-\psi (s,z',\y) + \psi (s,z',\y) -\psi (s,z',\y') \\
&=& (z-z')\Psi_1 (s,z,z',\y) + (\y-\y')\Psi_2(s,z',\y,\y'),
\end{eqnarray*}
we claim that we can take the new $4n$-contour $\tilde \Gamma (s,z)$, defined by,
$$
\left\{
\begin{array}{l}
 \Psi_1 (s,z,z',\y) = -\Im z + iR(\overline{z-z'})\,  ;\, |z-z'| < r; \\
 y= \Psi_2(s,z',\y,\y')+iR(\overline{\y'-\y})\,  ;\, |\y-\y'| < r,
\end{array}\right.
$$
where $r>0$ is small enough and $R>1$ is large enough.
Observe that, by (\ref{repRs}) (or (\ref{gradinv})), the  map $\y  \mapsto \nabla_z\psi (s,z,\y)$ is a local diffeomorphism, thus so is the map $\y  \mapsto \Psi_1 (s,z,z',\y)$ for $z'$ close enough to $z$, and this shows that $\tilde \Gamma (s,z)$ is indeed a well-defined $4n$-contour. Moreover, we see that the critical point is also given by,
$$
z'=z\, ; \, \y' =\y\, ;\, y=\nabla_\y\psi (s,z,\y )\, ;\, \nabla_z\psi (s,z,\y ) = -\Im z,
$$
and one can easily deduce that $\tilde \Gamma (s,z)$ is a good contour, too.
\vskip 0.2cm
As a consequence, for $v\in H_{\Phi_0}(|z-z_s|<2\varepsilon_0 )$, and up to an exponentially small error in $H_{\Phi_0}(|z-z_s|<\varepsilon_0 /2)$, we have,
\begin{eqnarray*}
A(s)v (z )&=&\frac1{(2\pi h)^{2n}}\int_{\tilde\Gamma (s,z)}e^{i(\y' -\y)y/h + i(\psi (s,z,\y) -\psi (s,z',\y'))/h}v(z') dyd\y dz'd\y'\\
&=& \frac1{(2\pi h)^{n}}\int_{\Gamma_1 (s,z)}e^{i(z-z') \Psi_1 (s,z,z',\y)/h} a(s,z', \y;h)v(z') d\y dz',
\end{eqnarray*}
with,
$$
a(s,z', \y;h):=\frac1{(2\pi h)^{n}}\int_{\Gamma_2 (s,z',\y)}e^{i(\y'-\y)(y- \Psi_2 (s,z',\y,\y'))/h}dyd\y',
$$
and where we have set,
\begin{eqnarray*}
&& \Gamma_1 (s,z) := \{(\y, z')\, ;\, \Psi_1 (s,z,z',\y) = -\Im z + iR(\overline{z-z'})\,  ;\, |z-z'| < r \};\\
&& \Gamma_2 (s,z',\y):= \{ (y,\y')\, ;\, y= \Psi_2(s,z',\y,\y')+iR(\overline{\y'-\y})\,  ;\, |\y-\y'| < r\}.
\end{eqnarray*}
Then, the change of variable $y\mapsto \tilde y=y- \Psi_2(s,z',\y,\y')$ shows that $a$ is indeed independent of $s$, and the Analytic Stationary Phase theorem gives $a(s,z', \y;h) = 1 +{\cal O}(e^{-\varepsilon /h})$ uniformly, with some $\varepsilon >0$ constant. In particular, A(s) is an elliptic pseudodifferential operator in the complex domain, and thus it admits a parametrix (see \cite{Sj}), that is , there exists a pseudodifferential operator $\tilde A(s)\, :\, H_{\Phi_0}(|z-z_s|<\varepsilon_0) \rightarrow H_{\Phi_0}(|z-z_s|<\varepsilon_2)$ ($\varepsilon_0 >0$ small enough arbitrary, $\varepsilon_2=\varepsilon_2(\varepsilon_0)\in(0,\varepsilon_1)$), such that, for any $v\in H_{\Phi_0}(|z-z_s|<\varepsilon_0)$,
\begin{equation}
\label{param}
 A(s)\tilde A(s)v=\tilde A(s)A(s)v = v \, \mbox{ in } H_{\Phi_0}(|z-z_s|<\varepsilon_3),
\end{equation}
for some $\varepsilon_3=\varepsilon_3(\varepsilon_0)\in (0,\varepsilon_2)$.
Now, setting,
$$
w(s) = \tilde F(s)T u (hs) \in H_{\Phi_0}(|z-z_0|<\varepsilon_1),
$$
by (\ref{eqTu}) and (\ref{eqFtilde}), we see that $w$ verifies,
$$
ih\partial_sw(s) = \left[ \tilde F(s)(h^2Q(s,h)-B)+h\tilde F_1(s)\right] Tu(hs).
$$
Moreover, by (\ref{param}), we have,
\begin{equation}
\label{Tuvsw}
T u (hs) = \tilde A(s)F(s)w(s)\,\mbox{ in } H_{\Phi_0}(|z-z_s|<\varepsilon_3).
\end{equation}
Therefore, in $H_{\Phi_0}(|z-z_s|<\varepsilon_3)$, we have,
\begin{equation}
\label{evolw}
ih\partial_sw(s) = \left[ \tilde F(s)(h^2Q(s,h)-B)+h\tilde F_1(s)\right] \tilde A(s)F(s)w(s).
\end{equation}
On the other hand, with the notations of Section \ref{microloc}, the symbol of $h^2Q(s,h)$ is,
\begin{eqnarray}
h^2q(s,z,\zeta ; h)&=&\frac{1}2\sum_{j,k=1}^n q_{j,k}(s,z,\zeta ,h)\zeta_j\zeta_k  + \frac{h}{2i}\sum_{j,k=1}^n\frac{\partial q_{j,k}}{\partial\zeta_j}(s,z,\zeta ,h)\zeta_k\nonumber\\
&& +h\sum_{\ell =1}^n q_\ell (s,z,\zeta ,h)\zeta_\ell +\frac{h^2}{2i}\sum_{\ell =1}^n\frac{\partial q_\ell}{\partial\zeta_\ell}(s,z,\zeta ,h)\nonumber\\
\label{q(s)}
&& + h^2q_0 (s,z,\zeta ,h),
\end{eqnarray}
and thus, the symbol of $h^2Q(s,h)-B$ is of the form $h\sum_{k=0}^{1/Ch}h^kc_k$, with $c_0={\cal O}(\jap{s}^{-\sigma})$, and $c_k={\cal O}(\jap{s}^{1-\sigma})$ when $k\geq 1$.
\vskip 0.2cm
Now, by the same arguments as for $A(s)$ (and that, indeed, are very standard in Sj\"ostrand's theory \cite{Sj}), we see that the operator,
$$
B_1(s):= \left[ \tilde F(s)(h^2Q(s,h)-B)+h\tilde F_1(s)\right] \tilde A(s)F(s)
$$
is a pseudodifferential operator on $H_{\Phi_0}(|z-z_0|<\varepsilon_1)$,  with symbol of the form $h\sum_{k=0}^{1/Ch}h^kb_{1,k}$, where $b_{1,0}={\cal O}(\jap{s}^{-\sigma})$, and $b_{1,k}={\cal O}(\jap{s}^{1-\sigma})$ if $k\geq 1$.
\vskip 0.2cm
Thus, we are reduced to a situation completely similar to that of Section \ref{flat} (the only differences are that $\varepsilon_0$ in (\ref{evolwflat}) has become $\varepsilon_3$ in (\ref{evolw}), and that, here, we are restricted to $s\geq 0$). Therefore, if for instance $(x_0,\xi_0)\notin \WF (u_0)$, the same proof as in Section \ref{flat} shows that,
$$
\Vert w(s)\Vert_{L^2_{\Phi_0}(z_0,\varepsilon_3)}\leq Ce^{-\delta_3/h},
$$
where $C,\delta_3$ are positive constants, and the inequality holds for all $h>0$ small enough and $0\leq s\leq T/h$. As a consequence, using (\ref{Tuvsw}) and the obvious fact that $\tilde A(s)F(s)$ is uniformly bounded from $L^2_{\Phi_0}(z_0,\varepsilon_3)$ to $L^2_{\Phi_0}(z_s,\varepsilon_4)$ for some $\varepsilon_4>0$, we obtain (with some  new constant $C>0$),
$$
\Vert Tu(hs)\Vert_{L^2_{\Phi_0}(z_s,\varepsilon_4)}\leq Ce^{-\delta_3/h}.
$$
Replacing $s$ by $t/h$, and observing that $z_{t/h}$ tends to $\kappa (x_+(x_0,\xi_0), \x_+(x_0,\xi_0))$ as $h\rightarrow 0_+$, we conclude that $(x_+(x_0,\xi_0), \x_+(x_0,\xi_0))\notin \WF (u(t))$ for all $t>0$. The converse can be seen in the same way, and thus Theorem \ref{mainth} is proved.

\appendix
\vskip 1cm
\centerline{\bf APPENDIX}

\section{Sj\"ostrand's Microlocal Analytic Theory}
In this section, we recall the most basic notions of Sj\"ostrand's theory \cite{Sj}, that we have used in our proof. When it has been possible, we have slightly modified some of the definitions to make them simpler.
\subsection{Classical Analytic Symbols}
A formal symbol $a(z;h)=\sum_{k\geq 0} h^ka_k(z)$ is said to be a {\it classical analytic symbol} on some open subset $\Omega\subset \co^n$ if every $a_k$ is a holomorphic function on $\Omega$ and there exists a constant $C>0$ such that, for all $k\geq 0$, one has,
$$
\sup_{z\in\Omega} |a_k(z)| \leq C^{k+1} k^k.
$$
(Note that, by Stirling formula, an equivalent definition is obtained by substituting $k!$ to $k^k$.) In that case, the symbol can be resummed by defining, for $h>0$ small enough, the following $h$-dependent holomorphic function on $\Omega$:
$$
\tilde a(z;h) := \sum_{k= 0}^{1/C'h} h^ka_k(z),
$$
where $C'>C$ is any constant greater than $C$. Then, if one changes $C'$, $\tilde a$ is modified by a uniformly exponentially small function  on $\Omega$, that is, a function uniformly ${\cal O}(e^{-\delta /h})$ for some constant $\delta >0$.

 \subsection{$H_{\Phi}$-Spaces }
 Let $\Phi =\Phi (z)$  be a smooth real-valued function defined in a neighborhood  $\Omega$ of some $z_0\in\co^n$. Then, a function $u=u(z;h)$, defined for $z\in\Omega$ and $h>0$ small enough, is said to be in the space $H_{\Phi}(\Omega)$ if $u$ is holomorphic with respect to $z\in\Omega$ and is not exponentially large with resepect to $e^{\Phi /h}$, that is, for any $\varepsilon >0$, there exists $C_\varepsilon >0$ such that,
 $$
 \sup_{z\in\Omega} e^{-\Phi (z)/h}|u(z;h)| \leq C_\varepsilon e^{\varepsilon /h},
 $$
 uniformly for $h>0$ small enough. Two elements of $H_{\Phi}(\Omega)$ are said to be {\it equivalent} when their difference is uniformly ${\cal O}(e^{(\Phi (z) -\delta )/h})$ in $\Omega$, for some constant $\delta >0$. In practical, one does not distinguish such two elements, and one uses the same notation $H_{\Phi}(\Omega)$ for the corresponding quotient space. 
 \vskip 0.2cm
 For $z_0\in\Omega$, one also considers the space of germs 
 $$
 H_{\Phi ,z_0}:= \bigcup_{z_0\in \Omega'\subset\Omega} H_{\Phi}(\Omega'),
 $$
 where two elements are identified when they describe the same element in some $H_{\Phi}(\Omega')$  with $z_0\in \Omega'\subset\Omega$.
  \vskip 0.2cm
 In the particular case where $\Phi =0$ identically, one obtains the space $H_0(\Omega)$, called the space of {\it analytic symbols} on $\Omega$.

\subsection{Good Contours} 
\label{goodcontour}
Let  $\varphi =\varphi (z)$ be a smooth real-valued function defined near some $z_0\in\co^n$, and such that $z_0$ is a saddle point for $\varphi$. In particular, at $z_0$, there are $n$ real directions where $\varphi$ increases and $n$ other real directions where $\varphi$ decreases. We call $n$-{\it contour} (or, sometimes, just {\it contour}) a submanifold of $\co^n$ of real codimension $n$. Then, a $n$-contour  containing $z_0$ is said to be a {\it good contour} for the phase $\varphi$ at $z_0$ if, for $z\in\gamma$ close to $z_0$, one has,
$$
\varphi (z) \leq \varphi(z_0) -\delta |z-z_0|^2,
$$
for some $\delta >0$ constant. In other words, this means that the tangent space of $\gamma$ at $z_0$ is mainly contained in the space generated by the real directions where $\varphi$ decreases (that is, more precisely, in  Morse coordinates $(x,y)\in \re^{2n}$ where $\varphi(z) =\varphi (z_0) + \frac12 (|x|^2 -|y|^2)$, $\gamma$ is given by an equation of the form $x=f(y)$, with $|f(y)| \leq \theta |y|$, $\theta <1$).

Then, if $\gamma$ is such a good contour and if $V\in H_\varphi(\Omega)$, the integral,
$$
I:=\int_{z\in\gamma\, ;\,|z-z_0| < r} V(z;h) dz,
$$
neither depends on $r>0$ small enough, nor on the choice of the good contour $\gamma$ (conveniently oriented), up to some error term
exponentially smaller than $e^{\varphi (z_0)/h}$. Indeed, the independence with respect to $r$ is an obvious consequence of the definition of a good contour, while the one with respect to $\gamma$ is a consequence of Stokes formula and of the fact that one can deform continuously any good contour into another one, in such a way that the contour remains good along the deformation (in Morse coordinates as before, if $x=f_1(y)$ and $x=f_2(y)$ define the two contours, one can take $x=(1-t)f_1(y) +tf_2(y)$, with $0\leq t\leq 1$, for the deformed contour).

\subsection{Pseudodifferential Operators on $H_{\Phi}$-Spaces}

Let $\Phi =\Phi (z)$  be a smooth real-valued function defined in a neighborhood  $\Omega$ of some $z_0\in\co^n$. Then, for any $z\in\Omega$, it is easy to check that the function,
$$
\varphi_z\, :\, \co^{2n}\ni (y,\z )\mapsto \Phi (y) -\Im((z-y)\z)
$$
admits a saddle point at $(y,\z)=(z, \frac2{i}\nabla_z\Phi (z))$, with critical value $\Phi (z)$ (here, $\nabla_z:=\frac12(\nabla_{\Re z} -i\nabla_{\Im z})$ stands for usual holomorphic derivative). Moreover, along the $2n$-contour $\gamma_z$, given by,
$$
\gamma_z\, :\, \z = \frac2{i}\nabla_z\Phi (z) +iR(\overline{ z-y})\, ;\, |y-z| <r,
$$
one has,
\begin{eqnarray*}
\varphi_z(y,\z )-\Phi (z) &=& \Phi (y) -\Phi (z) - \nabla_{\Re z}\Phi(z)\cdot \Re (y-z)\\
&& \hskip 2cm - \nabla_{\Im z}\Phi(z)\cdot \Im (y-z) - R|y-z|^2\\
&\leq & (C_r-R)|y-z|^2,
\end{eqnarray*}
where $C_r =\frac12\sup_{|y-z| \leq r}\Vert {\rm Hess}\hskip 1pt \Phi (y)\Vert$. As a consequence, $\gamma_z$ is a good contour for $\varphi_z$ as soon as $R>C_r$, and $r>0$ is sufficienly small . In that case, for any $u\in H_\Phi (\Omega)$, one can apply Subsection \ref{goodcontour} to $V_z(y,\z;h) := e^{i(z-y)\z /h}u(y;h)$, and we see that the function $I\, : \, z\mapsto I(z)$, given by,
\begin{equation}
\label{Idez}
I(z):= \int_{\gamma_z}e^{i(z-y)\z /h}u(y;h)dyd\z,
\end{equation}
is well defined on  any $\Omega'$ verifying $\{ d(y,\Omega ')<r\} \subset \Omega$. Moreover, it does not depend on the choice of $r>0$ small enough and on the good contour $\gamma_z$, up to some error term
exponentially smaller than $e^{\Phi (z)/h}$. Finally, despite the fact it is not holomorphic in $z$, one can modify it by a term exponentially smaller than $e^{\Phi (z)/h}$, in such a way that it becomes holomorphic near $z_0$. Indeed, by Stokes formula, it will be the case if we substitute $\gamma_{z_0}$ to $\gamma_z$ in (\ref{Idez}). Therefore, we have,
\begin{equation}
\label{intcont}
I(z) = \tilde I(z) + r(z),
\end{equation}
where $\tilde I(z):= \int_{\gamma_{z_0}}e^{i(z-y)\z /h}u(y;h)dyd\z \in H_{\Phi, z_0}$, and $r(z)$ is a smooth function uniformly smaller, together with all its derivatives, than $e^{\Phi (z)/h}$ near $z_0$. Let us also observe that, in $\tilde I(z)$, the contour $\gamma_{z_0}$ can be replaced by another one with same boundary, but  coinciding with $\gamma_z$ near the critical point $(y,\z)=(z, \frac2{i}\nabla_z\Phi (z))$. In practice, since the form of the contour $\gamma_z$ is of particular importance near the critical point, we use (\ref{intcont}) to identify $I(z)$ and $\tilde I(z)$, and therefore, by abuse of notation, we write: $I(z)\in H_{\Phi, z_0}$.
\vskip 0.2cm
Now, if $a= a(z,y,\z ;h)\in H_{0,(z_0, z_0,\z_0)}$ with $\z_0:=\frac2{i}\nabla_z\Phi (z_0)$, the previous discussion applies without changes if we substitute $a(z,y,\z ;h )u(y;h)$ to $u(y;h)$ in (\ref{Idez}), and permits to define the so-called {\it pseudodifferential operator in the complex domain} with symbol $a$, given by,
\begin{eqnarray}
\label{quantif}
A &:& H_{\Phi, z_0} \rightarrow H_{\Phi, z_0}\\
&& u \mapsto Au(z;h):= \frac1{(2\pi h)^n} \int_{\gamma_z}e^{i(z-y)\z /h}a(z,y,\z ;h)u(y;h)dyd\z. \nonumber
\end{eqnarray}
More precisely, since the definition of the integral as an element of $H_{\Phi, z_0}$ rests on the substitution of the contour $\gamma_z$ by $\gamma_{z_0}$, we see that if $\Omega_0\subset\subset\Omega_1$ are two small enough neighborhoods of $z_0$, and $r>0$ is taken small enough, then $A$ is a well defined operator from $H_{\Phi}(\Omega_1)$ to $H_{\Phi}(\Omega_0)$. Moreover, setting,
$$
L^2_{\Phi}(\Omega):= L^2(\Omega\, ;\, e^{-2\Phi (z)/h}d\Re z\hskip 1pt d\Im z)\cap H_{\Phi}(\Omega),
$$
and taking advantage of  the particular negative quadratic behavior of $\Phi (y) -\Im((z-y)\z)-\Phi(z)$ near the critical point, we immediately see that $A$ is uniformly bounded from $L^2_{\Phi}(\Omega_1)$ to $L^2_{\Phi}(\Omega_0)$, and its norm $\Vert A\Vert_{\Phi, \Omega_0, \Omega_1}$ is easily estimated by,
\begin{equation}
\label{estopd}
\Vert A\Vert_{\Phi, \Omega_0, \Omega_1}\leq C\sup_{{(y,\z)\in \gamma_z}\atop{z\in\Omega_0}}|a(z,y,\z;h)|,
\end{equation}
where $C>0$ is a constant independent of $a$ and $h$. Indeed, taking $R\geq C_r +1$ and parametrizing $\gamma$ with $y$, we obtain,
\begin{eqnarray*}
&& e^{-\Phi(z)/h}|Au(z;h)| \\
&& \leq (R/\pi h)^n\sup|a|\int_{|y-z| <r}e^{-|y-z|^2/h}e^{-\Phi(y)/h}|u(y;h)|d\Re y d\Im y,
\end{eqnarray*}
and (\ref{estopd}) follows by an application of the Schur lemma.
In the particular case where $a=1$ identically, and if the contour is conveniently oriented, the operator $A$ is just the identity (or, more precisely, the restriction to $\Omega_1$): see \cite{Sj} Proposition 3.3.
\vskip 0.2cm
It can also be seen (see \cite{Sj} Lemme 4.1) that, in the definition of $A$, the symbol $a$ can be replaced by a the quantity,
$$
\sigma_A (z,\z ;h):= \sum_{|\alpha|\leq 1/Ch} \frac1{\alpha!} \left( \frac{h}{i}\right)^{|\alpha|}\partial_\z^\alpha\partial_y^\alpha a(z,z,\z ;h),
$$
where $C>0$ is a large enough constant. Then,  different choices for $C$ give equivalent elements in $H_{0, (z_0,\z_0)}$, and the substitution of $\sigma_A$ to $a$ in (\ref{quantif}) gives rise to the same operator up to an exponentially small error term in the norm $\Vert \cdot \Vert_{\Phi, \Omega_0, \Omega_1}$. $\sigma_A$ is called the {\it symbol} of $A$, and the usual symbolic calculus extends to such operators. In particular, the composition of two such operators $A$ and $B$ (that is well defined as an operator on $ H_{\Phi, z_0} $) admits the symbol,
$$
\sigma_{A\circ B} =\sum_{|\alpha|\leq 1/Ch} \frac1{\alpha!} \left( \frac{h}{i}\right)^{|\alpha|}\partial_\z^\alpha \sigma_A \partial_z^\alpha \sigma_B,
$$
where $C>0$ is another large enough constant. Moreover, if $\sigma_A$ is elliptic at $(z_0,\z_0)$, one can construct a parametrix of $A$ in the same class, that is, a pseudodifferential operator $B$ such that $\sigma_{B\circ A}$ and $\sigma_{A\circ B}$ are equivalent to 1 in $H_{0, (z_0,\z_0)}$.
\vskip 0.2cm
Finally, if $A$ is given by (\ref{quantif}), and if $\tilde \Phi$ is another smooth real-valued function defined near  $z_0$, one can also study the continuity of $A$ on  $L^2_{\tilde\Phi}$ by substituting to $\gamma_z$ a singular contour of the form,
$$
\tilde\gamma_z\, :\, \z = \frac2{i}\nabla_z\Phi (z) +iR\frac{\overline{ z-y}}{|z-y|}\, ;\, 0<|y-z| <r,
$$
that does not affect the definition of $A$, up to an exponentially small error term (see \cite{Sj} Remarque 4.4). Then, with this new contour, one easily computes (see \cite{Sj} formula (4.12)), that,
\begin{eqnarray*}
&& e^{-\tilde\Phi(z)/h}|Au(z;h)| \\
&& \leq \frac{R^n}{(2\pi h)^n}\sup|a| \int_{|y-z| < r}\frac{e^{(C-R)|z-y|/h}}{|z-y|^n}e^{-\tilde\Phi(y)/h}|u(y;h)| d\Re y\hskip 1pt d\Im y,
\end{eqnarray*}
where $C>0$ depends on $\sup|\nabla \tilde\Phi|$ and $\sup |\nabla\Phi|$ only. In particular, taking $R>C$, one obtains that $A$ is uniformly bounded from  $L^2_{\tilde\Phi}(\Omega_1)$ to $L^2_{\tilde\Phi}(\Omega_0)$, with a norm ${\cal O}(\sup |a|)$, uniformly with respect to $h$.

\subsection{Fourier Integral Operators between $H_{\Phi}$-Spaces}

Now, for $j=1,2$, let $\Phi_j$ be a smooth real-valued function defined near some $z_j\in\co^n$.
Let also $\varphi =\varphi (z,y,\y)$ be a holomorphic function defined near $(z_1,z_2,\y_0)$, for some $\y_0\in \co^m$, and assume that, for any $z$ close enough to $z_1$, the map, 
$$
(y,\eta)\mapsto \Phi_2(y)-\Im\varphi (z,y,\y )
$$
 admits a saddle point at some $(y(z),\y(z))$ such that $(y(z),\y(z))\rightarrow (z_2,\y_0)$ as $z\rightarrow z_1$, and with critical value $\Phi_1(z)$. In particular,  one can find a good contour $\gamma_z$ (depending smoothly on $z$) for the phase $(y,\eta)\mapsto\Phi_2(y)-\Im\varphi (z,y,\y )$, and, for $u\in H_{\Phi_2,z_2}$ and $f\in H_{0, (z_1,z_2,\y_0)}$, one can define the Fourier Integral operator (in short, FIO) $F$ by the formula,
$$
Fu (z;h):= (2\pi h)^{-(n+m)/2}\int_{\gamma_z}e^{i\varphi (z,y,\y)/h}f(z,y,\y;h) u(y;h)dyd\y.
$$
Then, by arguments (and conventions) completely similar to those of the previous section, we see that $Fu\in H_{\Phi_1,z_1}$, that is,
$$
F\, :\, H_{\Phi_2,z_2} \rightarrow H_{\Phi_1,z_1},
$$
and, for any small enough neighborhood $\Omega_2$ of $z_2$, there exists a neighborhood $\Omega_1$ of $z_1$, such that $F$ is a uniformly bounded operator from $L^2_{\Phi_2}(\Omega_2)$ to $L^2_{\Phi_1}(\Omega_1)$.
\vskip 0.2cm
Moreover, if $A$ is a pseudodifferential operator on $H_{\Phi_1,z_1}$ as in the previous section, then $A\circ F$ is a FIO of the same form as $F$, but with $f$ replaced by the symbol $g$ defined by,
$$
A(e^{i\varphi (\cdot ,y,\y)/h}f(\cdot ,y,\y;h))(z)=g(z,y,\y;h)e^{i\varphi (z ,y,\y)/h}.
$$
Similarly, if $B$  is a pseudodifferential operator on $H_{\Phi_2,z_2}$, $F\circ B$ has again the same form as $F$, with $f$ replaced by,
$$
g'(z,y,\y ;h) := e^{-i\varphi (z,y,\y)/h}\hskip 3pt {}^tB(e^{i\varphi (z,\cdot ,\y)/h}f(z,\cdot ,\y;h))(y).
$$
(Here, ${}^tB$ stands for the formal transposed of $B$, and the fact that both $g$ and $g'$ are symbols result from a stationary-phase argument: see \cite{Sj} Section 4.)
\vskip 0.2cm
Finally, let us make the further assumption that, for any $y$ close enough to $z_2$, the map, 
$$
(z,\y)\mapsto \Phi_1(z)+\Im\varphi (z,y,\y )
$$
 admits a saddle point at some $(\tilde z(y), \tilde\y(y))$, tending to $(z_1,\y_0)$ as $y\rightarrow z_2$, and with critical value $\Phi_2(y)$. As before, for $\tilde f\in H_{0,(z_1,z_2,\y_0)}$, one can define, 
 $$
 \tilde F\, :\, H_{\Phi_1,z_1} \rightarrow H_{\Phi_2,z_2},
 $$
  by the formula,
$$
\tilde Fu(y;h) := (2\pi h)^{-(n+m)/2}\int_{\tilde\gamma_y}e^{-i\varphi (z,y,\y)/h}\tilde f(z,y,\y;h) u(z;h)dzd\y,
$$
where $\tilde\gamma_y$ is a good contour for $(z,\y)\mapsto \Phi_1(z)+\Im\varphi (z,y,\y )$. Then, writing,
$$
\varphi (z,y,\y)-\varphi (z',y,\y')=(z-z')\psi_1(z,z',y,\y) + (\y -\y')\psi_2(z',y,\y,\y'),
$$
(the so-called ``Kuranishi trick''), assuming that the map,
$$
(y,\y)\mapsto (\nabla_z\varphi (z,y,\y), \nabla_\y\varphi (z,y,\y)),
$$
is a local diffeomorphism, and using the analytic stationary-phase theorem, one can see that
the composition $F\circ\tilde F$ is a pseudodifferential operator on $H_{\Phi_1,z_1}$, with symbol $g(z,\z ;h) = f(z,y(z, \z), \y(z,\z))\tilde f(z,y(z, \z), \y(z,\z)) +{\cal O}(h)$, where $(y(z,\z), \y(z,\z))$ is the unique solution of the system,
$$
\left\{
\begin{array}{l}
\nabla_\y\varphi (y,z,\y) = 0;\\
\nabla_z\varphi (y,z,\y)=\z.
\end{array}\right.
$$
(In the particular case where $m=n$ and $\varphi$ is of the form,
$$
\varphi (z,y,\y) = \psi (z,\y) -y\y,
$$
then the last condition is verified if $\nabla_\y\nabla_z\psi (z_1,\y_0)$ is invertible.)


{}

\end{document}